\newtheorem{thmA}{Theorem}
\newtheorem{corA}[thmA]{Corollary}
\newtheorem{definition}{Definition} [section]
\newtheorem{theorem}[definition]{Theorem} 
\newtheorem{proposition}[definition]{Proposition}
\newtheorem{lemma}[definition]{Lemma}
\newtheorem{corollary}[definition]{Corollary}
\newtheorem{remark}[definition]{Remark}
\newtheorem{question}{Question}
\def\cS{{\mathcal{S}}}
\def\aut{{\rm{Aut}}}
\def\autN{{\rm{Aut}}(F_N)}
\def\out{{\rm{Out}}}
\def\outN{{\rm{Out}}(F_N)}
\def\S{\Sigma}
\def\<{\langle}
\def\>{\rangle}
\newcommand{\Out}{{\rm{Out}}}
\newcommand{\cals}{\mathcal{S}}
\newcommand{\Inn}{\text{Inn}}
\newcommand{\ad}{{\rm{ad}}}
\newcommand{\calt}{\mathcal{T}}
\newcommand{\Mod}{\text{{\rm{Mod}}}}
\newcommand{\wh}{{\mathrm{Wh}}_\mathcal{S}}
\def\Z{\mathbb{Z}}
\newcommand{\AF}{\mathcal{AF}_N}
\newcommand{\OF}{\mathcal{OF}_N}
\newcommand{\OFF}{\mathcal{OF}_2}
\DeclareMathOperator{\axis}{Axis}
\title{On the geometry of the free factor graph for $\aut(F_N)$}
\author{Mladen Bestvina, Martin R. Bridson and Richard D. Wade}
\date{8 March, 2024}
\begin{document}

\maketitle

\begin{abstract} 
Let $\Phi$ be a pseudo-Anosov diffeomorphism of a compact (possibly non-orientable) surface $\Sigma$ with one boundary component. We show that if $b \in \pi_1(\Sigma)$ is the boundary word, $\phi \in \aut(\pi_1(\Sigma))$ is a representative of $\Phi$ fixing $b$, 
and ${\rm{ad}}_b$  denotes conjugation by $b$, 
then the orbits of $\<\phi, {\rm{ad}}_b\>\cong\Z^2$ in the graph of free factors of $\pi_1(\Sigma)$ are quasi-isometrically embedded. It follows that for $N \geq 2$ the free factor graph for $\aut(F_N)$ is not hyperbolic, in contrast to the $\out(F_N)$ case.  
\end{abstract}

\section{Introduction} The set of nontrivial proper 
free factors of a free group $F_N$ can be ordered by inclusion and the
geometric realisation of the resulting poset  is the {\em free factor complex} for $\autN$.  
This complex was introduced by  Hatcher and  Vogtmann \cite{HV} who proved, 
in analogy with the Solomon--Tits theorem for the Tits
building associated to ${\rm{GL}}(N,\Z)$, that this complex
has the homotopy type of a wedge of spheres of dimension $N-2$.   Our focus here is on the large-scale 
geometry of the complex rather than its topology. This geometry is captured entirely by the 1-skeleton, i.e. the {\em free factor graph}  $\AF$, metrized as a length space with edges of length $1$. Thus the vertices of $\AF$ are the  nontrivial, proper free factors of $F_N$ and there is an edge joining $A$ to $B$ if $A < B$ or $B < A$. 
(When $N=2$ this definition is modified -- see Section \ref{s:whead}.) 

There is a natural action of $\autN$ on $\AF$. 
The quotient of $\AF$ by the subgroup of inner automorphisms is  called the {\em free factor graph for 
$\outN$}. This graph, which is denoted by $\OF$, 
has emerged in recent years as a pivotal object in the study of $\outN$. Much of its importance derives from the following fundamental result of Bestvina and Feighn \cite{BF}.
  
\begin{thmA}[Bestvina--Feighn \cite{BF}]\label{thm1}
The free factor graph $\OF$ is Gromov-hyperbolic.  The fully irreducible elements of $\Out(F_N)$ act as
loxodromic isometries of $\OF$ (i.e. have quasi-isometrically embedded orbits)
 while every other element has a finite orbit. 
\end{thmA}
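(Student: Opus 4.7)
The plan is to adapt the Masur--Minsky strategy for hyperbolicity of the curve complex, replacing Teichmüller geodesics with folding paths in Culler--Vogtmann Outer Space $CV_N$. The three assertions separate naturally: (1) hyperbolicity of $\OF$, (2) the loxodromic action of fully irreducibles, and (3) bounded orbits otherwise. Part (3) is the easiest: if $\phi \in \out(F_N)$ is not fully irreducible then some positive power $\phi^k$ preserves the conjugacy class of a nontrivial proper free factor $A$, so $\phi$ has an orbit of size at most $k$ on the vertex $[A] \in \OF$.

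For (1) and (2) I would introduce a coarse projection $\pi \colon CV_N \to \OF$ sending a marked metric graph $G$ to a free factor carried by some \emph{short} proper subgraph --- for example, one whose edge lengths all fall below an appropriate threshold, or the core of the $\epsilon$-thin part. The key technical ingredient, which I expect to be the main obstacle, is to show that folding paths $(G_t)$ in $CV_N$ project to uniform unparametrized quasi-geodesics in $\OF$. This requires a careful analysis of how short free factors evolve as one folds, and would rely on the theory of legal turns and laminations for free-group automorphisms developed by Bestvina--Feighn--Handel. The guiding intuition is that once an edge becomes short along a folding path it cannot later become long without first producing a different short edge, so the sequence of short factors progresses in a controlled, coarsely monotonic way.

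Once folding paths are known to project to quasi-geodesics, hyperbolicity of $\OF$ should follow from a Gromov-type criterion --- for instance Bowditch's \emph{guessing geodesics} lemma: one exhibits, between any two vertices of $\OF$, a slim, coarsely transitive family of paths (the projections of folding paths) and deduces $\delta$-hyperbolicity from the existence of such a family. For the loxodromic action of a fully irreducible $\phi$, I would use its train-track representative to produce a $\phi$-invariant folding axis in $CV_N$; its projection to $\OF$ is then a $\phi$-invariant quasi-geodesic, and the stretch factor $\lambda(\phi)$ controls the rate at which short-factor data changes along one period of the axis, giving a positive lower bound on translation length and hence a quasi-isometric orbit.

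The principal difficulty is this projection theorem for folding paths, because the target metric on $\OF$ is purely combinatorial (adjacency is by inclusion of free factors), whereas folding paths are geometric objects defined on marked graphs. Bridging this geometric-to-combinatorial gap --- in effect, proving an ``$\OF$-version'' of the Masur--Minsky projection estimates --- is where essentially all the work lies; the hyperbolicity criterion and the loxodromic/elliptic dichotomy are then comparatively formal consequences.
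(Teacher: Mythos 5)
The paper does not prove Theorem~A; it is stated as a result of Bestvina and Feighn and simply cited from \cite{BF}, so there is no internal proof to compare your proposal against. What you have written is a high-level reconstruction of the actual Bestvina--Feighn argument, and at that level it is broadly accurate: they do build a coarse projection from Outer space $CV_N$ to the free factor graph, they do prove that folding paths project to uniform unparametrized quasi-geodesics, they do conclude hyperbolicity from a Masur--Minsky-type thin-triangles criterion applied to this family of paths, and the loxodromic behaviour of fully irreducibles is extracted from their train-track folding axes. Your part (3) is correct and elementary: an element that is not fully irreducible has, by definition, a power fixing the conjugacy class of a proper free factor, hence a finite orbit.

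Two caveats are worth flagging. First, a small inaccuracy: the Bestvina--Feighn projection sends a marked metric graph $G$ to the set of conjugacy classes $[\pi_1 G']$ over \emph{all} proper, connected, noncontractible subgraphs $G'$ of $G$, with no shortness or thin-part hypothesis; one proves this set has uniformly bounded diameter in $\OF$. Restricting to ``short'' subgraphs as you suggest gives a coarsely equivalent map, but it is not what they do and it needlessly complicates matters when $G$ is thick (in which case there may be no short subgraph at all). Second, and more substantively, your proposal is a plan rather than a proof: you correctly identify that essentially all the content is in the projection estimate for folding paths, but nothing in the sketch moves toward establishing it. One should not mistake having located the hard step for having carried it out; a genuine proof of Theorem~A would need to reproduce the technical core of \cite{BF}.
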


The natural map $\outN\to{\rm{Isom}}(\OF)$ is an isomorphism \cite{BB}, 
so  Theorem \ref{thm1} describes all possible actions of cyclic groups on $\OF$. 
As the graph is hyperbolic, we also know that $\Z^2$ cannot act with quasi-isometric orbits. In fact, 
results in the literature \cite{BB, HM} tell us something much stronger:

\begin{thmA}\label{thm2} If $r\ge 2$, then every faithful action of $\Z^r$ by isometries on $\OF$ has a finite orbit. 
\end{thmA}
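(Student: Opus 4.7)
The plan is to use the result of \cite{BB} that $\Isom(\OF)\cong\outN$ to convert a faithful isometric action of $\Z^r$ on $\OF$ into an embedding $\iota\co\Z^r\hookrightarrow\outN$; write $G=\iota(\Z^r)$. I must show that $G$ has a finite orbit on $\OF$ whenever $r\ge 2$. The argument then splits into two cases depending on whether $G$ contains a fully irreducible element.

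In the first case, suppose $G$ contains a fully irreducible $\phi$. By Theorem~\ref{thm1}, $\phi$ acts loxodromically on the Gromov-hyperbolic space $\OF$, with two distinct boundary fixed points $[T^\pm]\in\partial\OF$. Each element of $G$ commutes with $\phi$ and so stabilises the pair $\{[T^+],[T^-]\}$; passing to an index-two subgroup we may assume each point is fixed individually. Using the identification of $\partial\OF$ with equivalence classes of arational $\R$-trees, the $\outN$-stabiliser of each $[T^\pm]$ is virtually infinite cyclic --- a standard fact in the circle of results around \cite{HM}. This forces $G$ to be virtually cyclic, contradicting $r\ge 2$.

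In the second case, $G$ contains no fully irreducible. By the Handel--Mosher subgroup classification \cite{HM}, a finitely generated subgroup of $\outN$ whose elements are all non-fully-irreducible virtually fixes a conjugacy class of proper free factors. Hence a finite-index subgroup of $G$ fixes a vertex of $\OF$, so $G$ itself has a finite orbit.

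The bulk of the work is outsourced to \cite{BB} and \cite{HM}; the only novel content is the case split on whether $G$ contains a fully irreducible, combined with the familiar fact that an abelian subgroup of the isometry group of a Gromov-hyperbolic space which contains a loxodromic is virtually cyclic. Of the two deep inputs, the centraliser/endpoint-stabiliser virtual cyclicity in Case~1 feels like the more substantial obstacle, since it relies on the arational tree technology used to identify $\partial\OF$.
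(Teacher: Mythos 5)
Your argument is correct and is essentially the paper's proof: both reduce via the Bridson--Vogtmann rigidity theorem \cite{BB} to an abelian subgroup of $\outN$, rule out fully irreducible elements in such a subgroup, and then invoke Handel--Mosher \cite{HM} to get a finite orbit. The only cosmetic difference is that the paper cites directly the theorem that centralisers of fully irreducibles in $\outN$ are virtually cyclic, whereas you rederive that fact from the virtual cyclicity of the $\outN$-stabiliser of the boundary fixed points of a fully irreducible (a result usually attributed to Bestvina--Feighn--Handel and Kapovich--Lustig rather than to \cite{HM}); these are two formulations of the same underlying fact.
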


\begin{proof}
The natural map $\outN\to{\rm{Isom}}(\OF)$ is an isomorphism \cite{BB}, so it suffices to prove this theorem 
for abelian subgroups of $\outN$. The centralisers in $\Out(F_N)$ of fully irreducible elements
are virtually cyclic, so an abelian subgroup of rank greater than $1$ contains no fully irreducible elements. 
Handel and Mosher \cite{HM} proved that if a finitely generated subgroup $G < \out(F_N)$
contains no fully irreducible elements, then $G$ has a finite orbit in $\OF$ (a
fact that remains true even if $G$ is not finitely generated \cite{horbez}).  
\end{proof}

The purpose of this article is to prove results concerning $\AF$ that contrast sharply with
Theorems \ref{thm1} and \ref{thm2}. We shall 
construct actions of $\Z^2$ on $\AF$ such that the orbits are free and metrically undistorted. We shall
also establish a criterion that tells us certain inner automorphisms act as isometries with
quasi-isometrically embedded orbits, just as all
fully irreducible automorphisms do. For the moment, however, we are unable to offer a concise classification
of all the isometries that have such orbits. (The natural map $\autN\to{\rm{Isom}}(\AF)$ is an isomorphism so, as in 
the ${\rm{Out}}$ case, there ought to be a classification in purely algebraic terms.)

The abelian subgroups $\Z^2\hookrightarrow\autN$ that we shall focus on are constructed using 
pseudo-Anosov diffeomorphisms of compact surfaces with one boundary component. Let $\S$ be such a surface
with Euler characteristic $1-N$. Let $\Mod(\S)$ be the mapping class group of $\S$, that is, $\pi_0$ of the subgroup of ${\rm{Diff}}(\S)$ that fixes $\partial \S$ pointwise.  Then by fixing a basepoint on $\partial \S$ we obtain an identification 
$\pi_1\S=F_N$ and a monomorphism $\Mod(\S)\to \autN$. 
The Dehn twist in the boundary of $\S$ is central in $\Mod(\S)$. We are interested in the $\Z^2$
subgroup that this twist generates with any  pseudo-Anosov element of $\Mod(\S)$. The Dehn twist acts on $\pi_1\S=F_N$ as an inner automorphism $\ad_b$, where $b\in F_N$ is the boundary loop. 

\begin{thmA}\label{thm3} Let $\S$ be a compact surface with one boundary component, fix an identification 
$\pi_1\S=F_N$,  let $b\in F_N$ be the homotopy class of the boundary loop,  let $\phi\in\autN$
be the automorphism induced by a pseudo-Anosov element of $\Mod(\S)$, let $\Lambda = \<\phi,\, \ad_b\><\autN$
and note that $\Lambda\cong\Z^2$. Then, for every $A\in \AF$, the orbit map $\lambda \mapsto \lambda(A)$, defines a 
quasi-isometric embedding  $\Lambda\cong \mathbb{Z}^2 \hookrightarrow \mathcal{AF}$.
\end{thmA}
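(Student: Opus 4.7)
The plan is to establish a linear lower bound on $d_{\mathcal{AF}}(A,\lambda A)$ in $|n|+|m|$ for every $\lambda = \phi^{n}\ad_{b}^{m} \in \Lambda$; the matching Lipschitz upper bound is automatic because $\Lambda$ is finitely generated and $\autN$ acts on $\mathcal{AF}$ by isometries. The strategy is to construct two coarsely Lipschitz projections out of $\mathcal{AF}$, each detecting one of the two directions in $\Lambda \cong \Z^{2}$, and then to combine them.

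The first projection is the natural $1$-Lipschitz quotient $\pi \co \mathcal{AF}\to\mathcal{OF}$. Inner automorphisms lie in its kernel, so the image of $\lambda$ in $\outN$ is $\bar\phi^{n}$. Because $\Phi$ is pseudo-Anosov on a compact surface with one boundary component, any $\<\bar\phi\>$-invariant conjugacy class of proper free factors of $F_{N}=\pi_{1}\S$ would be supported on an essential $\Phi$-invariant subsurface, contradicting pseudo-Anosovness; hence $\bar\phi$ is fully irreducible. Theorem~\ref{thm1} then yields $c_{1}>0$ and $K_{1}$ with $d_{\mathcal{OF}}(\pi A,\pi\lambda A) \geq c_{1}|n|-K_{1}$, and hence $d_{\mathcal{AF}}(A,\lambda A) \geq c_{1}|n|-K_{1}$.

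The second projection is the main obstacle. We need a coarsely Lipschitz map from $\mathcal{AF}$ to a space $Y$ on which $\ad_{b}$ has positive translation length, while the $\phi$-direction is controlled. The $\mathcal{OF}$-projection is blind to $\ad_{b}$ by construction, so $Y$ must retain the finer information in $\mathcal{AF}$ — in effect, the position of a free factor relative to the basepoint and the boundary loop. Two natural candidates: (a) a tree $T$ on which $F_{N}$ acts so that $b$ is hyperbolic (for instance, a Bass-Serre tree or a truncation of $\widetilde{\S}$), with $A \mapsto$ the closest-point projection onto $\axis(b)$ of the minimal $A$-invariant subtree $T_{A}$; or (b) a cylinder/arc-type complex attached to $\S$ that records the boundary Dehn twist. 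In (a) one has $T_{b^{m}Ab^{-m}} = b^{m}T_{A}$, so the projection translates by $m\cdot\ell(b)$ along $\axis(b)$, and since $\phi(b)=b$ the $\phi$-action preserves the axis and shifts the projection by $O(|n|)$, matching what $\pi$ already sees. The crucial verification is coarse Lipschitz-ness against edges of $\mathcal{AF}$: an inclusion $A\subset B$ must change the $Y$-coordinate by $O(1)$, which requires controlling how the boundary loop can be rerouted through a refinement of a chain of free factors. This is precisely where I expect the Whitehead-graph and cylinder machinery (notation $\wh$, $\Cyl$; see \S\ref{s:whead}) to do the real work.

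Once the second projection yields $d_{\mathcal{AF}}(A,\lambda A)\geq c_{2}|m|-K_{2}$, combining with the $\mathcal{OF}$ estimate gives
\[
d_{\mathcal{AF}}(A,\lambda A) \;\geq\; \tfrac{1}{2}\bigl(c_{1}|n|+c_{2}|m|\bigr) - (K_{1}+K_{2}),
\]
which, together with the Lipschitz upper bound, is exactly a quasi-isometric embedding $\Lambda\cong\Z^{2}\hookrightarrow\mathcal{AF}$. The commutativity $[\phi,\ad_{b}]=1$ needed for $\Lambda\cong\Z^{2}$ is immediate from $\phi(b)=b$, and since $\mathcal{AF}$ is a homogeneous space for the isometry group $\autN$ it suffices to verify the estimate for a single basepoint $A$.
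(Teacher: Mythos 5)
Your high-level strategy is the same as the paper's: build two coarsely Lipschitz projections out of $\mathcal{AF}$, one through $\OF$ detecting the $\phi$-direction (using \cite{BF}), and one detecting $\ad_b$. Your candidate (a) for the second projection — closest-point projection of $T_A$ onto $\axis(b)$ in the Cayley tree — is essentially the paper's invariant $[A]_b^{\mathcal{S}}$ defined via $b$-reduced decompositions (Remark~\ref{r:gi} makes this identification explicit). However, there are two genuine gaps, both of which you partly flag but do not close, and each carries real mathematical content in the paper.

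First, the coarse Lipschitz-ness of $A\mapsto [A]_b$ against edges of $\mathcal{AF}$ is the heart of Theorem~\ref{thm4}, and it is not automatic. It rests on Whitehead's Cut-Vertex Lemma (Lemma~\ref{l:Whitehead}): because $b$ is filling and $\mathcal{S}$ minimises $|b|_{\mathcal{S}}$, the Whitehead graph $\wh(b)$ has no cut vertex (Proposition~\ref{p:minimal}), whereas every simple element's does; this forces $T_A\cap X_b$ to have length at most $|b|_{\mathcal{S}}$ for any proper free factor $A$ (Proposition~\ref{p:34}), from which the Lipschitz estimate follows. You gesture at "Whitehead-graph and cylinder machinery" but do not identify this mechanism, and without the cut-vertex argument the bound on $|T_A\cap X_b|$ — and hence Lipschitz-ness — does not follow.

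Second, and more seriously, your claim that "$\phi$ preserves the axis and shifts the projection by $O(|n|)$" is not justified and is not obviously true. The automorphism $\phi$ does \emph{not} act on the Cayley tree $T$ by isometries, so it does not "preserve the axis" in any literal sense; and the projection $[\cdot]_b^{\mathcal{S}}$ is defined relative to a fixed basis $\mathcal{S}$, which $\phi$ does not preserve. A priori the shift of $[\cdot]_b$ under $\phi^n$ could be wildly uncontrolled. This is precisely what Proposition~\ref{p:tt} of the paper addresses: using a stable train-track representative of $[\phi_0]$ from \cite{BH}, a specific choice of basis (coming from the train track graph, which automatically minimises $|b|_{\mathcal{S}}$), and the existence of a legal primitive loop through the fixed basepoint, the paper shows that $|[\phi^r\ad_b^k(A)]_b - [\ad_b^k(A)]_b|$ is bounded uniformly in $r$ and $k$. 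Without this control the combination of your two lower bounds does not yield the asserted estimate, because when $m$ and $n$ have opposite signs the unbounded shift could make the second coordinate small. Your proposal identifies the right two coordinates but supplies neither the cut-vertex input nor the train-track input that make them work together.
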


\begin{corA}
For $N \geq 2$, the free factor graph $\AF$ is not Gromov-hyperbolic.
\end{corA}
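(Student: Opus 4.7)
The plan is to derive the corollary directly from Theorem~C by invoking the standard fact that a Gromov-hyperbolic geodesic metric space admits no quasi-isometric embedding of $\Z^2$. Concretely, if $\iota\co\Lambda\hookrightarrow\AF$ is a $(K,C)$-quasi-isometric embedding with $\Lambda\cong\Z^2$, then the images under $\iota$ of equilateral triangles in $\Lambda$ of side length $L$ are $(K,C)$-quasi-geodesic triangles in $\AF$ whose side-midpoints are at distance at least $L/(2K)-C$ from the union of the other two sides. Were $\AF$ to be $\delta$-hyperbolic, the Morse lemma would replace these by geodesic triangles that are $D$-slim for a uniform $D=D(\delta,K,C)$, which is absurd once $L\gg D$.

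It therefore suffices to verify the hypothesis of Theorem~C for every $N\geq 2$: the existence of a compact surface $\Sigma$ with one boundary component, Euler characteristic $1-N$, and a pseudo-Anosov element of $\Mod(\Sigma)$. For even $N$, I would take $\Sigma$ to be the orientable surface of genus $N/2$ with one boundary, for which the existence of pseudo-Anosov classes is classical Nielsen--Thurston theory. For odd $N\geq 3$, I would take $\Sigma$ to be the non-orientable surface with $N$ crosscaps and one boundary, so that $\pi_1(\Sigma)\cong F_N$ and $\chi(\Sigma)=1-N$, and produce pseudo-Anosov mapping classes via Penner's ping-pong construction applied to a pair of filling two-sided simple closed curves on $\Sigma$. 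The case $N=2$ is also covered by the orientable once-punctured torus, even though the definition of $\AF$ is modified there.

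With such data fixed, Theorem~C supplies the $\Z^2$-quasi-flat in $\AF$ that drives the quasi-flat argument of the first paragraph. The main --- indeed only --- point requiring any care beyond a direct application of Theorem~C is the verification of pseudo-Anosov existence in the non-orientable cases; everything else is the routine $\Z^2$-obstruction to hyperbolicity.
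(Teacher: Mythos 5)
Your proposal is correct and matches the paper's proof: both deduce the corollary from Theorem~\ref{thm3} via the standard fact that a hyperbolic space admits no quasi-isometric $\Z^2$-embedding, and both verify the hypothesis by taking the orientable genus-$N/2$ surface for $N$ even and the non-orientable surface with $N$ crosscaps (equivalently, the connected sum of a genus-$(N-1)/2$ orientable surface with $\mathbb{RP}^2$, as the paper phrases it) for $N$ odd, with the existence of pseudo-Anosovs in the non-orientable case attributed to Penner. The extra detail you give on the $\Z^2$-vs-hyperbolicity obstruction and on the ping-pong construction is fine but not a different route.
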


These results should be compared with the work of Hamenst\"{a}dt \cite{ursula},
 who constructed similar quasi-flats in
spotted disc complexes of handlebodies and in certain sphere complexes.

A crucial property of the boundary element $b\in F_N$ is that it is {\em filling} in the sense that it is not contained 
in any proper free factor of $F_N$.
Given any filling element $b$, we analyse the action of $\ad_b$ on $\AF$ with the aim
of showing that the orbits of $\<\ad_b\>$ give quasi-isometric embeddings of $\Z$. The 
key tool in this analysis is the notion of  \emph{$b$-reduced decomposition} for elements
$w\in F_N$, which we introduce in Section~3. Using these decompositions, we define an integer-valued 
invariant $[A]_b$ of free factors $A$ and we use this to define a  Lipschitz retraction of $\AF$ 
onto the $\<\ad_b\>$-orbit of a vertex $V$; roughly speaking, this retraction sends the vertex $A$ to  $\ad_b^n(V)$ if $n=[A]_b$.

\begin{thmA}\label{thm4}
If an element $b \in F_N$ is not contained in any proper free factor,  then
there is a Lipschitz retraction of $\AF$ onto each orbit of $\<\ad_b\>$. In particular, these orbits are quasi-isometrically embedded.
\end{thmA}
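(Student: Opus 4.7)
The plan is to construct an integer-valued function $[\cdot]_b$ on the vertex set of $\AF$ with the following two properties:
\begin{enumerate}
\item[(a)] \emph{Equivariance:} there is a fixed $c \geq 1$ with $[\ad_b^n(A)]_b = [A]_b + cn$ for every $A \in \AF$ and every $n \in \mathbb{Z}$;
\item[(b)] \emph{Lipschitz property:} there is a uniform constant $K$ with $|[A]_b - [B]_b| \leq K$ whenever $A$ and $B$ are joined by an edge of $\AF$.
\end{enumerate}
Given these, fix any vertex $V \in \AF$ and define $r(A) := \ad_b^{n(A)}(V)$, where $n(A) := \lfloor ([A]_b - [V]_b)/c \rfloor$. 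Property (a) yields $r(\ad_b^m V) = \ad_b^m V$, so $r$ retracts $\AF$ onto $\langle \ad_b \rangle \cdot V$. Property (b) gives $|n(A) - n(B)| \leq K/c + 1$ for adjacent $A, B$, so $n(\cdot)$ is Lipschitz as an integer-valued function on $\AF$; the map $r$ inherits Lipschitzness via the trivial bound $d_{\AF}(\ad_b^j V, \ad_b^k V) \leq |j-k|\, d_{\AF}(V, \ad_b V)$. Applying the Lipschitz bound on $n(\cdot)$ to the pair $\ad_b^m V, \ad_b^n V$ gives $|m - n| \leq (K/c + 1)\, d_{\AF}(\ad_b^m V, \ad_b^n V)$, and combined with the matching linear upper bound shows that the orbit map $\mathbb{Z} \to \AF$, $n \mapsto \ad_b^n V$, is a quasi-isometric embedding.

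The heart of the matter is defining $[\cdot]_b$. Following the outline preceding the theorem, I would first introduce a canonical \emph{$b$-reduced decomposition} for each element $w \in F_N$: an expression of $w$ as an alternating product of powers of $b$ and ``$b$-free'' pieces, in which the signed sum of $b$-exponents is an honest invariant of $w$. Here the filling hypothesis on $b$ is essential --- it prevents $b$ or its powers from being absorbed into any proper free factor, and so forces the decomposition to be essentially unique up to bounded error. For a free factor $A$, one then sets $[A]_b$ to be a canonical choice (minimum, maximum, or suitable average) of $b$-exponents over, say, a Nielsen-minimal basis of $A$. Property (a) should follow more or less directly from the construction, since conjugation by $b$ adds an outer $b^{\pm 1}$ to each $b$-reduced decomposition.

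The main technical obstacle is establishing (b). An edge of $\AF$ corresponds to an inclusion $A < B$ of proper free factors, and $A$ and $B$ may have very different ranks. I expect the required bound to reduce to a combinatorial lemma about $b$-reduced decompositions roughly of the following form: if $w \in F_N$ lies in a subgroup generated by elements whose $b$-exponents all lie in a bounded range, then the $b$-exponent of $w$ is controlled in terms of those exponents and the combinatorial complexity of $b$. Proving such a lemma will require a careful analysis of cancellation in $F_N$, exploiting the filling hypothesis to exclude wild cancellation of $b$-powers against the given generators. Once the lemma is in hand, applying it in both directions of the inclusion $A < B$ yields (b) and completes the proof.
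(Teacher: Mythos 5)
Your high-level scaffold matches the paper: define an integer invariant $[\cdot]_b$ on free factors via $b$-reduced decompositions, show it is Lipschitz and (coarsely) equivariant for $\ad_b$, and read off the retraction and the quasi-isometric embedding. But there is a genuine gap in step (a), and a serious under-specification of the tools needed for both (a) and (b).

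Your property (a) is stated as an exact identity, $[\ad_b^n(A)]_b = [A]_b + cn$, and you assert it should ``follow more or less directly'' because conjugation ``adds an outer $b^{\pm1}$''. This is false. Take $F_2 = \langle x,y\rangle$ and $b = xyx^{-1}y^{-1}$. Then $[y]_b = 0$, but $byb^{-1}$ reduces to $xyx^{-1}yxy^{-1}x^{-1}$, which does not begin with $b$, so $[\ad_b(y)]_b = 0$ as well; the conjugating $b$'s get eaten. The correct statement is only coarse, and proving even that requires a real argument. In the paper this is the Cancellation Lemma: if $[a]_b = 0$ and $a$ is \emph{simple}, then $[b^3 a b^{-3}]_b \ge 1$. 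The hypotheses matter --- one needs a power of $b$ (not $b$ itself), one needs $a$ to lie in a proper free factor, and one needs $b$ filling \emph{and} the basis $\mathcal{S}$ to minimize $|b|_{\mathcal{S}}$, so that Proposition \ref{p:minimal} makes $\wh(b)$ have no cut vertex while the Cut Vertex Lemma forces $\wh(a)$ to have one. Your sketch mentions none of this, and your retraction formula $n(A) = \lfloor ([A]_b - [V]_b)/c \rfloor$ relies on the exact form of (a), so it does not survive the correction without reworking.

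The same tools are what make (b) and the finiteness of $[A]_b$ go through, and again they are absent from your proposal. The paper does not define $[A]_b$ via a chosen basis of $A$; it takes $[A]_b := \sup_{a\in A}[a]_b$ and proves (Proposition \ref{p:34}) that for a proper free factor this sup is finite and in fact $|[a_1]_b - [a_2]_b| \le 1$ for all $a_1,a_2\in A$. That one-width window is exactly what makes the Lipschitz bound for an inclusion $A<B$ immediate. The proof of Proposition \ref{p:34} is geometric: project the minimal $A$-invariant subtree $T_A$ to the axis $X_b$ in the Cayley tree, use \cite[Lemma 4.3]{paulin} to find a single $a\in A$ whose axis covers $T_A\cap X_b$, and then compare Whitehead graphs to bound $|T_A\cap X_b|$ by $|b|_{\mathcal{S}}$. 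Your alternative (``$b$-exponents over a Nielsen-minimal basis of $A$'' plus ``a careful analysis of cancellation'') would additionally require you to show basis-independence and control the exponent of an arbitrary product of generators, which is precisely the kind of wild-cancellation problem the Whitehead-graph/axis argument is designed to bypass. In short: the skeleton is right, but the two load-bearing facts --- the Cancellation Lemma (coarse equivariance) and the cut-vertex/axis analysis (finiteness and the 1-Lipschitz bound) --- are missing, and the exact-equivariance shortcut you propose in their place does not hold.
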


In Section \ref{s:whead} we recall classical results of Whitehead and others describing the structure of
elements $b \in F_N$ that are not contained in any proper free factor. This structure controls the behaviour 
of the function $A\mapsto [A]_b$ that is used in   Section \ref{s:filling} to prove
Theorem \ref{thm4}. With Theorem \ref{thm4} in hand, one feels that Theorem \ref{thm3} ought to follow easily,
since orbits of the fully irreducible automorphism $\phi$ project to quasi-geodesics in $\OF$, 
but the details are slightly delicate; this is explained in Section \ref{s:flats}. 

The fact that we cannot offer a concise classification of the isometries of 
$\AF$ is symptomatic of the fact that the large-scale geometry of $\AF$ is poorly understood. In Section \ref{s:last} we highlight some of the begging questions in this regard.

\textbf{Acknowledgements.} This paper is dedicated to Slava Grigorchuk on his 70th birthday, with thanks for his many highly-original contributions to group theory. We thank the anonymous referee for their helpful comments. The first author acknowledges the support of the National Science Foundation under the grant number DMS-2304774. The third author is supported by the Royal Society of Great Britain through a University Research Fellowship.

\section{Background}\label{s:whead}

We assume that the reader is familiar with the rudiments of the theory of free-group automorphisms.

We write $F_N$ to denote the free group on $N$ generators. Let $\mathcal{S}$  be a basis of $F_N$. Every element $w \in F_N$ is represented by a unique reduced word in the letters $\mathcal{S} \cup \mathcal{S}^{-1}$; the
length of this word is denoted by $|w|_\cS$. 
 We write $=$ to denote equality in $F_N$ and $\equiv$ to denote equality as words. A subgroup $A \leq F_N$ is a \emph{free factor} if it is generated by a subset of a basis, and is a \emph{proper free factor} if $A \neq F_N$. 

 The 
natural definition of $\AF$ that we gave in the introduction is unsatisfactory in the case $N=2$, since
there  are no edges.  We remedy this by decreeing that a pair of vertices $\<u\>$
and $\<v\>$  are to be connected by an edge if and only if $\{u,v\}$ is a basis for $F_2$. Then
$\OFF$ is defined to be the quotient by the action of the inner automorphisms. With
these conventions, $\OFF$ is  the Farey graph with the
standard action of ${\rm{Out}}(F_2)\cong {\rm{GL}}(2,\Z)$.

\subsection{Whitehead graphs, word length, and the cut-vertex lemma}

An element of $F_N$  is \emph{primitive} if it belongs to some basis, and is \emph{simple} if it belongs to some proper free factor. Every element $w$ has a \emph{cyclic reduction} $w_0$, obtained by writing $w\equiv w_1w_0w_1^{-1}$ with $w_1$ as long as possible. An element is \emph{cyclically reduced} (with respect to the basis $\mathcal{S}$) if $w=w_0$. We use $\wh(w)$ to denote the cyclic Whitehead graph of $w$. This is the graph with vertex set $\mathcal{S} \cup \mathcal{S}^{-1}$ that has an edge from $x$ to $y^{-1}$ if $xy$ is a subword of the cyclic reduction $w_0$ or if $x$ is the last letter of $w_0$ and $y$ is the first letter of $w_0$. Importantly for us, the cyclic Whitehead graph keeps track of the turns crossed by the axis of the element $w$ in the Cayley tree determined by $\mathcal{S}$.

A vertex $v$ of a graph is a \emph{cut vertex} if the full subgraph spanned by vertices not equal to $v$ is disconnected. The famous lemma below was stated by Whitehead for primitive elements \cite{Whitehead} and the generalization to simple elements was observed by various authors (see, e.g. \cite[Proposition~49]{Martin} or \cite{Otal, Stallings, HW}). (For readers who pursue the references: Whitehead uses the word \emph{simple} differently to modern authors---his \emph{simple sets} are subsets of bases.)

\begin{lemma}[Whitehead's Cut-Vertex Lemma \cite{Whitehead}] \label{l:Whitehead}
Let $N \geq 2$. If $w$ is a simple element of $F_N$ then for any basis $\mathcal{S}$ of $F_N$ the cyclic Whitehead graph $\wh(w)$ contains a cut vertex.
\end{lemma}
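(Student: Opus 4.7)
The plan is to use the classical Whitehead peak-reduction machinery. Since $\mathrm{Wh}_\cS(w)$ depends only on the cyclic reduction of $w$, one may assume $w$ is cyclically reduced relative to $\cS$. The key technical input is the length-change formula for a Whitehead automorphism $\sigma = (A,a)$ of the second kind: the difference $|\sigma(w)|_\cS - |w|_\cS$ equals, up to a sign convention,
\[
e_{\mathrm{Wh}_\cS(w)}\bigl(A\ssm\{a\},\;A^c\ssm\{a^{-1}\}\bigr) \;-\; d_{\mathrm{Wh}_\cS(w)}(a),
\]
where $e(\cdot,\cdot)$ counts edges between two vertex sets and $d$ denotes degree. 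The decisive consequence is that a length-reducing Whitehead move forces the pivot $a$ to be a cut vertex of $\mathrm{Wh}_\cS(w)$, since the cut $(A\ssm\{a\},\,A^c\ssm\{a^{-1}\})$ then has strictly fewer crossings than the degree of $a$.

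I would then split into two cases. If $\cS$ is not Whitehead-minimal for $w$, i.e.\ some Whitehead move strictly decreases $|w|_\cS$, the formula directly exhibits a cut vertex of $\mathrm{Wh}_\cS(w)$. Otherwise $\cS$ is Whitehead-minimal, and Whitehead's peak-reduction theorem identifies $|w|_\cS$ with the global minimum $\min_{\mathcal{T}} |w|_\mathcal{T}$ over all bases. Since $w$ is simple, there exists a basis $\mathcal{T}$ relative to which $w$ lies in the free factor generated by some proper subset $\mathcal{T}' \subsetneq \mathcal{T}$; every letter of $\mathcal{T} \ssm \mathcal{T}'$ is then an isolated vertex of $\mathrm{Wh}_\mathcal{T}(w)$, so that graph is disconnected. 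Such a $\mathcal{T}$ also realises the global minimum, so peak reduction supplies a chain of Whitehead moves from $\mathcal{T}$ to $\cS$ along which the length $|w|$ is constant.

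The main obstacle is verifying that the property ``contains a cut vertex'' propagates along each length-preserving Whitehead step in this chain. For a length-preserving move $\sigma = (A,a)$, the formula forces the crossing number and the degree of $a$ to coincide; applying the analogous formula to $\sigma^{-1}$ on the resulting Whitehead graph converts this balance into the statement that a cut vertex persists across the move. Inducting along the chain, $\mathrm{Wh}_\cS(w)$ inherits a cut vertex from the disconnected graph $\mathrm{Wh}_\mathcal{T}(w)$, completing the proof.
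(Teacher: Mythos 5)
The paper does not prove this lemma; it cites it as a classical result (Whitehead, with the extension to simple elements attributed to Martin, Otal, Stallings, and Heusener--Weidmann), so there is no in-paper argument to compare against. Your proposal attempts the ``peak reduction'' route, which is one known strategy, but as written it has a genuine gap at its heart. The claim that the cut-vertex property ``propagates along each length-preserving Whitehead step'' is asserted rather than proved: you say the balance of the crossing number with $d(a)$, fed into the formula for $\sigma^{-1}$, ``converts into the statement that a cut vertex persists across the move,'' but this does not follow. The numerical identity $e(\cdot,\cdot)=d(a)$ is a single scalar equation and carries no information about the connectivity structure of $\wh(\sigma(w))$; how that graph is rebuilt from $\wh(w)$ under the move requires a careful, separate combinatorial analysis, and it is precisely this step that historically caused trouble (Heusener and Weidmann's paper, cited in the text, was written in part to repair incomplete ``folklore'' versions of exactly this kind of argument, ultimately by switching to a Stallings-fold/geometric approach rather than patching the peak-reduction step).

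Two smaller issues. First, the length-change formula is misstated: with the paper's conventions (and checking on $w=xyx^{-1}y^{-1}$ with $\sigma=(\{x,y\},x)$, which is length-preserving), the correct expression is $e_{\wh(w)}(A,A^c)-d(a)$, not $e_{\wh(w)}(A\ssm\{a\},A^c\ssm\{a^{-1}\})-d(a)$; your version gives $-2$ where the true change is $0$. Second, you assert without justification that a basis $\mathcal{T}$ extending a basis of a proper free factor containing $w$ realises the global minimum of $|w|$; this requires first minimising inside the factor and then invoking the (true, but not free) fact that the minimum over bases of a free factor equals the minimum over bases of $F_N$. Even granting all of that, the propagation lemma is the load-bearing step, and as written the argument for it is not there.
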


If the Whitehead graph $\wh(w)$ contains a cut vertex that is not isolated, then there is a \emph{Whitehead automorphism} that reduces the cyclic length of $w$. Using this, one can prove the following standard proposition, which can be seen as a partial converse to Whitehead's lemma.

\begin{proposition}\label{p:minimal}
Let $N \geq 2$. If $w \in F_N$ is not contained in a proper free factor and $|w|_\mathcal{S}\le |\phi(w)|_\mathcal{S}$
for all $\phi\in\aut(F_N)$, then $\wh(w)$ contains no cut vertex.  
\end{proposition}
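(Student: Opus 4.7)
I will prove the contrapositive: if $\wh(w)$ contains a cut vertex and $w$ is not contained in a proper free factor, then some $\phi \in \autN$ strictly shortens $|w|_\mathcal{S}$. The approach is the classical peak-reduction argument of Whitehead, applied to an automorphism dictated by the cut vertex.

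First I reduce to the cyclically reduced case. If $w \neq w_0$, then conjugating by the initial segment $w_1$ with $w \equiv w_1 w_0 w_1^{-1}$ is an (inner) automorphism strictly shortening $w$, so I may assume $w = w_0$. Let $v$ be the given cut vertex and fix a partition $(\mathcal{S} \cup \mathcal{S}^{-1}) \setminus \{v\} = A \sqcup B$ with $A, B$ nonempty and no $\wh(w)$-edge between $A$ and $B$. In a cyclically reduced word the $\wh(w)$-degree of a vertex $u$ equals $n_u + n_{u^{-1}}$, where $n_u$ is the number of occurrences of the letter $u$ in $w$; so if $v$ were isolated, the basis letter $s \in \mathcal{S}$ with $\{v,v^{-1}\} = \{s,s^{-1}\}$ would not appear in $w$, placing $w$ in the proper free factor $\langle \mathcal{S} \setminus \{s\}\rangle$ and contradicting the filling hypothesis. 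Hence $v$ has positive $\wh(w)$-degree, and after relabelling $A \leftrightarrow B$ I may assume $v$ has at least one $\wh(w)$-neighbour in $A$.

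Next I apply the Whitehead automorphism $\phi$ of the second kind with multiplier $v$ and set $W = A \cup \{v\}$: explicitly $\phi(v) = v$, and for $x \neq v^{\pm 1}$ one sets $\phi(x)$ to be $v^{-1}xv$, $xv$, $v^{-1}x$, or $x$, according to whether $x, x^{-1}$ lie in $W$ or $W^c$. Reading $w_0 = c_1 \cdots c_n$ cyclically, a $vv^{-1}$ cancellation occurs at the junction $c_i c_{i+1}$ precisely when $c_i \in W$ and $c_{i+1}^{-1} \in W$, i.e.\ at each $\wh(w)$-edge having both endpoints in $W$. Bookkeeping the naive length $\sum_i |\phi(c_i)|$ against these cancellations yields the length-change identity
\[
|\phi(w)|_{\mathcal{S}} - |w|_{\mathcal{S}} \;=\; E(A,B) \;-\; E(\{v\}, A),
\]
where $E(X,Y)$ counts $\wh(w)$-edges between disjoint vertex sets. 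By the cut-vertex property $E(A,B) = 0$, and by the previous paragraph $E(\{v\}, A) \geq 1$; hence $|\phi(w)|_{\mathcal{S}} < |w|_{\mathcal{S}}$, contradicting the minimal-length hypothesis.

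The main obstacle is deriving the length-change formula cleanly. Each letter $c_i$ contributes $1 + \mathbf{1}_{c_i \in A} + \mathbf{1}_{c_i^{-1} \in A}$ to $\sum_i |\phi(c_i)|$, and summing over $i$ gives $|w| + \sum_{u \in A}(n_u + n_{u^{-1}})$; by the handshake identity this equals $|w| + 2E(A,A) + E(A,\{v\}) + E(A,B)$, using that $\wh(w)$ has no loop at $v$ since $w$ is cyclically reduced. Each cancellation removes $2$ letters, and the number of cancellations is the number of edges internal to $W$, namely $E(A,A) + E(A,\{v\})$ (again by the no-loop observation). Subtracting twice this from the naive sum, the bulk terms cancel and one is left with the stated formula. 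The computation is elementary but fiddly; alternatively one can invoke the standard statement in Lyndon--Schupp or in \cite{Martin}.
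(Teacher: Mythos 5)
The paper itself gives no proof of this proposition --- it is cited as ``standard,'' with the one-line hint that a non-isolated cut vertex yields a length-reducing Whitehead automorphism. Your approach is exactly the classical Whitehead peak-reduction argument that hint points to, and most of the pieces are in order: the reduction to cyclic reduction, the observation that a cut vertex cannot be isolated when $w$ fills (via $\deg u = n_u + n_{u^{-1}}$), and the length-change bookkeeping when $v^{-1}\in B$.

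There is, however, a genuine gap at the step where you set $W = A\cup\{v\}$ and invoke the ``Whitehead automorphism of the second kind.'' That definition, and the identity
\[
|\phi(w)|_{\mathcal S} - |w|_{\mathcal S} = E(A,B) - E(\{v\},A),
\]
both require $v^{-1}\notin W$, i.e.\ $v^{-1}\in B$. You never arrange or verify this; after relabelling to put a neighbour of $v$ in $A$, the vertex $v^{-1}$ can perfectly well land in $A$. In that case your formula is already wrong in the first bookkeeping line: for $c_i \in\{v,v^{-1}\}$ one has $|\phi(c_i)|=1$, whereas $1 + \mathbf 1_{c_i\in A} + \mathbf 1_{c_i^{-1}\in A}$ evaluates to $2$, and the cancellation count also changes because $\phi(c_i)$ no longer ends in $v$ when $c_i=v^{-1}$. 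The conclusion $|\phi(w)|<|w|$ is then not justified.

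The repair is not purely cosmetic. You should take $A$ to be a union of components of $\wh(w)\setminus v$ \emph{not} containing $v^{-1}$. If $v$ has a neighbour there, your computation works verbatim. But it is possible that every neighbour of $v$ lies in the component of $\wh(w)\setminus v$ containing $v^{-1}$; then $\wh(w)$ itself is disconnected, and you must change cut vertex. Let $D$ be a component of $\wh(w)$ containing neither $v$ nor $v^{-1}$. Since $w$ fills and is cyclically reduced every vertex has positive degree, so $|D|\ge 2$, and $D$ cannot be inverse-closed: if $D=D^{-1}$, then for consecutive letters $c_i$, $c_{i+1}$ the edge $c_i\to c_{i+1}^{-1}$ forces $c_i$ and $c_{i+1}$ to lie in the same part, and cyclic induction puts all of $w$ in $\langle\,\{s\in\mathcal S:s^{\pm1}\in D\}\,\rangle$ or its complement, contradicting filling. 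Pick $v'\in D$ with $(v')^{-1}\notin D$; it is again a cut vertex, has a neighbour in a component $C\subseteq D$ of $\wh(w)\setminus v'$, and $(v')^{-1}\notin C$, so the Whitehead automorphism $(C\cup\{v'\},v')$ now strictly reduces the length. Only with this extra case is the contradiction complete. (A minor additional remark: your formula really computes the \emph{cyclic} length of $\phi(w)$; since $\phi(w)$ need not be cyclically reduced you should post-compose with a conjugation before concluding, but this is routine.)
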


Note that any graph with at least 3 vertices that is disconnected contains a cut vertex. As we are working with $N \geq 2$, all of the Whitehead graphs we consider have at least 4 vertices, and so those without cut vertices are connected.

\section{Orbits in $\AF$ of filling inner automorphisms} \label{s:filling}

The goal of this section is to prove Theorem \ref{thm4}: 
 if $b \in F_N$ is not contained in a proper free factor, then there is a Lipschtiz retraction of $\AF$ onto 
 each $\langle \ad_b \rangle$-orbit.   
These retractions will be constructed using \emph{$b$-reduced decompositions}.

%
%
%
%

\subsection{$b$-reduced decompositions and a cancellation lemma}

\begin{definition}[$b$-reduced decomposition]
Fix a basis $\mathcal{S}$ of $F_N$. Given a cyclically reduced word $b$ and a reduced word $w$ representing elements of $F_N$,  the \emph{$b$-reduced decomposition} of $w$
 (with respect to  $\mathcal{S}$)  is the decomposition \[w\equiv b^kw_bb^{-k}\] (equality as reduced words) such that $|k|$ is maximal. Define $[w]^\cS_b:=k\in\Z$. 
Given a subgroup $A \subset F_N$, define \[ [A]^\mathcal{S}_b:= \sup\{[a]^\cS_b : a \in A \}, \] allowing $[A]^{\mathcal{S}}_b = \infty$. 
\end{definition}

We shall drop the superscript $\mathcal{S}$ from $[w]^\cS_b$ and $[A]_b^\mathcal{S}$ when there is no danger of ambiguity.
%
\bigskip

\begin{remark}[Geometric Interpretation] \label{r:gi}Consider the action of $F_N$ on the Cayley tree associated to $\mathcal{S}$.
As  $b$ is cyclically reduced, it acts as  translation through a distance $|b|$ in an axis $X_b$ that passes through the
basepoint $1$ and contains each vertex $b^i$. Roughly speaking, 
$[a]_b$ records (with sign) the time that the path from $1$ to the axis $X_a$ 
of $a$ spends  on $X_b$. More precisely,
the orthogonal projection of $X_a$ onto $X_b$ is contained in a minimal interval of the form $[b^i, b^j]$ and 
$[a]_b$ is $i$ if $i>0$, is $j$ if $j<0$, and is $0$ if $i\le 0 \le j$ -- see the proof of Lemma \ref{l:cancellation}.
Note that the projection of $X_a$ to $X_b$ is a point if $X_a\cap X_b=\emptyset$
and equals $X_a\cap X_b$ otherwise.

The minimal invariant subtree $T_A$ for a subgroup $A\le F_N$ is the union of the axes $X_a$ with $a\in A$. If
 $T_A\cap X_b$ is compact, as it will be if $A$ is finitely generated and $A\cap\, \<b\> =1$, then
 either it is empty, in which case the projection of $T_A$ is a point and $[A]_b=[a]_b$ for all $a\in A$,
 or else $T_A\cap X_b$ is contained in a minimal interval of the form $[b^I, b^J]$ and from the geometric 
 description of $[a]_b$ we have $I\le [A]_b\le J$.
 These observations lead to  the philosophically important approximation
$$
[A]_b \approx \frac{{\rm{dist}}(\pi_{X_b}(T_A),\, 1)}{|b|},
$$
where $\pi_{X_b}$ denotes orthogonal projection to $X_b$ and the constants implicit in the approximation 
depend on the length of $T_A\cap X_b$.
\end{remark}

\begin{figure}[h]
\centering
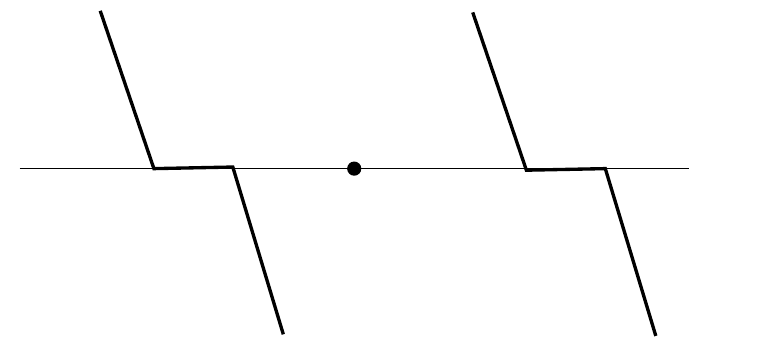
\caption{The proof of the cancellation lemma.}\label{f:cancellation_lemma} 
\end{figure} 

\begin{lemma}[Cancellation Lemma]\label{l:cancellation}
Suppose that $b \in F_N$ is not contained in a proper free factor and let $\mathcal{S}$ be a basis of $F_N$ that minimizes $|b|_\mathcal{S}$. Suppose that $a \in F_N$ is contained in a proper free factor and $[a]_b^\cS=0$.
 Then the first and last $|b|_\mathcal{S}+1$ letters of $w\equiv b^3 \cdot a \cdot b^{-3}$ remain after reducing $w$. In particular, $[b^3ab^{-3}]_b^\cS \geq 1$.
\end{lemma}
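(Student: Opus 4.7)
Plan: I argue by contradiction, treating the ``first $|b|+1$ letters'' case—the tail case is symmetric. Cancellation in $w=b^3ab^{-3}$ occurs only at the two junctions; for the first $|b|+1$ letters to fail to survive, at least $2|b|$ letters must cancel at the $b^3\cdot a$ junction. A direct letter-by-letter count (equivalently, the axis picture of Remark~\ref{r:gi}) shows that if $a$ begins with $b^{-m}$ as a reduced word with $m$ maximal, then exactly $m|b|$ letters cancel there. The assumption therefore forces $m\ge 2$, so we may write $a\equiv b^{-2}a'$ as a reduced word with $a'$ not beginning with $b^{-1}$.

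Next I use $[a]_b=0$ to control the cyclic reduction $a=c\,a_0\,c^{-1}$ (reduced, with $a_0$ cyclically reduced and $|c|$ maximal). Since $a$ begins with $b^{-2}$, the first $|c|$ letters of $a$—which form the word $c$—are a prefix of $b^{-2}$, so $c^{-1}$ (the last $|c|$ letters of $a$) is a suffix of $b^2$. If $|c|\ge |b|$ then $c^{-1}$ ends with $b$, so $a$ ends with $b$; combined with $a$ beginning with $b^{-1}$ this gives a reduced decomposition $a=b^{-1}\tilde a\,b$, contradicting $[a]_b=0$. Hence $|c|\le |b|-1$. In the Cayley tree, the geodesic from $1$ to $a$ spells the word $a$: its first $2|b|$ edges run along $X_b$ from $1$ to $b^{-2}$, and its middle segment from $c$ to $c\,a_0$ lies on $X_a$. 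Because $|c|<|b|$, the subsegment $[c,b^{-2}]$ lies on both axes and has length $2|b|-|c|\ge|b|+1$; its label coincides with the first $2|b|-|c|$ letters of $a_0$, which therefore form the length-$(2|b|-|c|)$ suffix of $b^{-2}$.

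Any length-$(|b|+1)$ subword of the bi-infinite periodic word $\ldots b^{-1}b^{-1}b^{-1}\ldots$ realises $|b|$ consecutive cyclic transitions of the cyclic word $b^{-1}$, hence every edge of $\wh(b)$. Consequently every edge of $\wh(b)$ appears among the transitions of $a_0$, so $\wh(a)=\wh(a_0)\supseteq\wh(b)$ as graphs on the vertex set $\cS\cup\cS^{-1}$. By Proposition~\ref{p:minimal}, $\wh(b)$ has no cut vertex; since removing edges from a disconnected graph keeps it disconnected, any cut vertex of $\wh(a)$ would also be a cut vertex of $\wh(b)$, which is impossible. So $\wh(a)$ has no cut vertex. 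But $a$ is simple, and Whitehead's Cut-Vertex Lemma~\ref{l:Whitehead} then produces one—contradiction. The symmetric argument rules out $a$ ending in $b^2$, so cancellation at each junction eats at most $|b|$ letters; the reduced word $b^3ab^{-3}$ thus begins with at least one $b$ and ends with at least one $b^{-1}$, yielding $[b^3ab^{-3}]_b^{\cS}\ge 1$. The most delicate point is the bound $|c|\le|b|-1$, since without it the shared segment $[c,b^{-2}]$ need not be long enough to recover all of $\wh(b)$ inside $a_0$.
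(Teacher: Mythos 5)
Your argument is a proof by contradiction via the cyclic reduction $a\equiv c\,a_0\,c^{-1}$, whereas the paper argues directly: it bounds the intersection of $X_b$ with the shortest path $p$ from $1$ to $X_a$ (length $\le |b|-1$, using $[a]_b=0$) and with $X_a$ itself (length $\le |b|$, using the cut-vertex comparison), then reads off that the point $y$ furthest along $X_b$ in the $b^{-1}$ direction lying on $p\cup X_a$ satisfies $d(1,y)\le 2|b|-1$, so $d(1,b^3y)\ge |b|+1$. Your use of Whitehead graphs and of $[a]_b=0$ is in the same spirit, but the route is different, and that difference is where the trouble lies.

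The gap is at the sentence claiming that $[c,b^{-2}]$ ``lies on both axes'' and that ``its label coincides with the first $2|b|-|c|$ letters of $a_0$.'' What you have established is that the geodesic $[1,a]$ meets $X_a$ precisely in the subsegment $[c,c\,a_0]$, and meets $X_b$ in a segment containing $[1,b^{-2}]$. For $[c,b^{-2}]$ to lie on $X_a$ you need $b^{-2}$ to lie between $c$ and $c\,a_0$ on $[1,a]$, i.e.\ $|c|+|a_0|\ge 2|b|$; equivalently, for $a_0$ to have ``first $2|b|-|c|$ letters'' agreeing with a suffix of $b^{-2}$ you need $|a_0|\ge 2|b|-|c|$. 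Neither follows from $|c|\le|b|-1$ alone; from $|a|\ge 2|b|$ you only get $|a_0|=|a|-2|c|\ge 2|b|-2|c|$, which is strictly smaller than $2|b|-|c|$ when $c\ne 1$. In the complementary case $|c|+|a_0|<2|b|$ the geodesic $[1,a]$ exits $X_a$ at $c\,a_0$ before reaching $b^{-2}$, the first letter of $a_0$ differs from the first letter of $c^{-1}$ (since $b^{-2}$ is reduced), and $[c,b^{-2}]\not\subseteq X_a$ — so the step as written fails. The case can in fact be eliminated with further work (one finds that $c^{-1}$ being simultaneously a suffix of $b$ and, on its initial segment, a suffix of $b^{-1}$ forces a cancelling pair $x_px_{p+1}$ in $b$ when $|a_0|\le|b|-1$, and the Whitehead comparison still applies when $|a_0|\ge|b|$), but this requires a genuine case analysis that the write-up omits. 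The paper's approach avoids the issue entirely because it never passes to $a_0$: it bounds $|X_a\cap X_b|$ by the cut-vertex lemma without needing $X_a\cap X_b$ to coincide with a particular piece of $[1,a]$. Two smaller imprecisions: ``exactly $m|b|$ letters cancel'' should be ``between $m|b|$ and $(m+1)|b|-1$'' (the conclusion $m\ge 2$ is unaffected), and ``cancellation at each junction eats at most $|b|$ letters'' should read ``at most $2|b|-1$ letters''; you should also say a word about why the cancellations at the two junctions cannot interact (for instance, because $a$ cannot both begin with $b^{-1}$ and end with $b$ without violating $[a]_b=0$).
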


\begin{proof}
As $\mathcal{S}$ minimizes $|b|_\mathcal{S}$, in particular $b$ is cyclically reduced. Let $T$ be the Cayley tree for $F_N$ with respect to the basis $\mathcal{S}$.
 As $b$ is cyclically reduced, the axis $X_b$ of $b$ passes through the identity vertex $1\in T$.
 Let $X_a$ be the axis for $a$. Let $p$ be the shortest path from $1$ to $X_a$. The intersection of $p$ with $X_b$ is of length at most $|b|_\mathcal{S}-1$, as the reduced word representing $a$ starts with the word labelling $p$ and ends with its inverse and $[a]_b=0$.  Furthermore, the intersection of $X_a$ with $X_b$ can have 
 length at most $|b|_\mathcal{S}$: to see this, note that the cyclic Whitehead graph $\wh(b)$ does not contain a cut vertex by Proposition~\ref{p:minimal}, whereas $\wh(a)$ does by the Cut Vertex Lemma, so $X_a$ cannot contain all the turns (pairs of consecutive of labels from $\mathcal{S}\cup\mathcal{S}^{-1}$) that appear along $X_b$. 
 
 Let $y$ by the furthest point along $X_b$ in the $b^{-1}$ direction that is contained in $p \cup X_a$. By the above, $d(1,y) \leq 2|b|_\mathcal{S} -1$. It follows that the distance from $1$ to  $b^3y$ (in the positive direction along $X_b$)
is at least $ |b|_\mathcal{S} +1$. This implies that the path from $1$ to the axis of $b^3ab^{-3}$ contains the interval $[1,b^3y]$. Again, as the path from 1 to the axis gives the initial and terminal (in reverse) subwords of $b^3ab^{-3}$, the result follows. 
\end{proof}

\begin{remark} Consider the orbit $\mathcal{O}=\{\langle b^kab^{-k} \rangle: k \in \mathbb{Z}\}$ of a free factor $\langle a \rangle$ generated by a primitive element under the action of  $\ad_b$. It  is not hard to see that the map $f:\mathcal{O} \to \mathbb{Z}$ given by $f(w)=[w]_b$ is injective outside of $f^{-1}(0)$. And under the assumptions of the above lemma,   $|f^{-1}(0)| \leq 3$. \end{remark}

\begin{proposition}\label{p:34}
Suppose $b \in F_N$ is cyclically reduced with respect to $\cS$ and that the cyclic Whitehead graph $\wh(b)$ is connected with no cut vertex. Let $T$ be the Cayley tree with respect to $\mathcal{S}$ and let $X_b$ be the axis of $b$ in $T$. If $A\le F_N$ is a proper free factor and $T_A$ is the minimal subtree of $A$ in $T$, then
\begin{enumerate}
\item  $|T_A\cap X_b|\le |b|_{\cS}$.
\item For any two elements $a_1, a_2 \in A$, \[ |[a_1]_b^\cS - [a_2]_b^\cS| \leq 1. \]
\end{enumerate}
 In particular, $[A]_b^\cS \in \mathbb{Z}$.
\end{proposition}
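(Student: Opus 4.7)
The plan is to prove (1) by contradiction, then deduce (2) from (1) using the geometric picture in Remark~\ref{r:gi}.

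\textbf{Proof of (1).} Suppose, aiming for a contradiction, that $|T_A \cap X_b| \geq |b|_\cS + 1$. The first step is to produce a non-trivial element $a \in A$ whose axis contains $T_A \cap X_b$. The segment $T_A \cap X_b$ is a geodesic in $T_A$, so its image in the core graph $\Gamma_A = T_A/A$ is a reduced walk $w$. Because $T_A$ is the minimal $A$-invariant subtree, $\Gamma_A$ has no valence-one vertices, which gives enough room to extend $w$ on either side to a \emph{cyclically} reduced closed walk $w'$ based at some vertex $v_0 \in \Gamma_A$. The element $a \in A$ corresponding to $w'$ is hyperbolic with translation length $|w'|$; cyclic reducedness of $w'$ forces the lift of $w'$ starting at the chosen lift of $v_0$ to be a geodesic in $T_A$ lying on the axis $X_a$. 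In particular $X_a \supset T_A \cap X_b$.

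Now $X_a$ contains a sub-interval of $X_b$ of length at least $|b|_\cS + 1$. Since $X_b$ carries a bi-infinite periodic word of period $|b|_\cS$, every consecutive pair of edge labels appearing on $X_b$ already appears on $X_a$; equivalently, every edge of $\wh(b)$ is also an edge of $\wh(a)$. By hypothesis $\wh(b)$ is connected with no cut vertex, and passing to a supergraph on the same vertex set cannot create a cut vertex, so $\wh(a)$ also has no cut vertex. But $a$ lies in the proper free factor $A$ and so is simple, which by Whitehead's Cut-Vertex Lemma (Lemma~\ref{l:Whitehead}) forces $\wh(a)$ to contain a cut vertex --- a contradiction.

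\textbf{Proof of (2).} By Remark~\ref{r:gi}, $[a]_b$ is determined by the closest-point projection of $X_a$ onto $X_b$. If $T_A \cap X_b = \emptyset$, then this projection is the same single point of $X_b$ for every $a \in A$, so $[a_1]_b = [a_2]_b$. Otherwise, by (1), $T_A \cap X_b$ is an interval of $X_b$ of length at most $|b|_\cS$, so its endpoints straddle at most two consecutive vertices $b^k$ and $b^{k+1}$. For each $a \in A$ the axis $X_a$ lies in $T_A$, so its projection onto $X_b$ is contained in $T_A \cap X_b$, and the minimal interval $[b^i, b^j]$ containing this projection satisfies $\{i, j\} \subset \{k, k+1\}$. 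The defining formula for $[a]_b$ then gives $[a_1]_b, [a_2]_b \in \{k, k+1\}$ (with the obvious sign adjustment if $0$ lies between $k$ and $k+1$), so $|[a_1]_b - [a_2]_b| \leq 1$. Finally $[A]_b$ is the supremum of integers confined to a set of diameter at most $1$, hence is itself an integer.

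\textbf{Main obstacle.} The delicate point is producing $a$ in the first step of (1): one must extend $w$ to a \emph{cyclically} reduced closed walk, so that the lift lies on an axis rather than merely on some path from $v_0$ to $a \cdot v_0$. When $A$ is cyclic, $\Gamma_A$ is a circle and one simply continues around it; in higher rank, the absence of valence-one vertices in $\Gamma_A$ supplies enough alternative edges at both endpoints of $w$ to avoid a backtrack both at the junction and at the wrap-around. Once $a$ is in hand, the rest is monotonicity of ``no cut vertex'' under enlarging the edge set, plus a direct application of the Cut-Vertex Lemma.
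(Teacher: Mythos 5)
Your proof has the same overall structure as the paper's: reduce (1) to producing an element $a\in A$ whose axis contains the segment $T_A\cap X_b$, then contradict the Cut-Vertex Lemma; deduce (2) from (1) via the geometric interpretation of $[\,\cdot\,]_b$. The one genuine difference is the production of $a$. The paper invokes a ready-made result (\cite[Lemma~4.3]{paulin}: any compact subtree of the minimal $A$-tree lies on some axis), whereas you construct $a$ by hand in the core graph $\Gamma_A = T_A/A$, extending the immersed image of $T_A\cap X_b$ to a cyclically reduced loop. Your observation that $\wh(b)$ embeds as a subgraph of $\wh(a)$ on the same vertex set, and that passing to a supergraph cannot create a cut vertex, is a nice way to make the contradiction completely explicit (the paper states this a little less formally in the proof of the Cancellation Lemma).

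That said, the extension step is precisely where your argument is only a sketch: extending a reduced path in $\Gamma_A$ to a cyclically reduced closed walk that still contains it is true but requires some care -- one needs to handle separately the cases where $\Gamma_A$ is a circle and where it has a vertex of valence $\geq 3$, and in the latter case arrange the wrap-around of the closing path so that both junctions avoid a backtrack. ``No valence-one vertices gives enough room'' is the right intuition but not yet a proof. You flag this yourself as the main obstacle; citing Paulin's lemma, as the paper does, sidesteps it cleanly. There is also a small imprecision in your proof of (2): it is not always true that the minimal interval $[b^i,b^j]$ containing the projection of $X_a$ has $\{i,j\}\subset\{k,k+1\}$. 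If $T_A\cap X_b$ straddles a single vertex $b^m$ with non-degenerate arcs on both sides, the minimal bracketing interval is $[b^{m-1},b^{m+1}]$. The final bound $|[a_1]_b-[a_2]_b|\leq 1$ still holds (one checks cases according to the sign of $m$, as the paper does), but the intermediate claim as stated is off and should be adjusted to account for this configuration.
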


\begin{proof} If $T_A\cap X_b=\emptyset$ then $X_a$ has the same projection to $X_b$ for all $a \in A$. In this case part (1) is trivial and $[a_1]_b^\cS=[a_2]_b^\cS$ for all $a_1,a_2, \in A$ by the Geometric Interpretation (Remark~\ref{r:gi}). Otherwise, the projection $T_A$ to $X_b$ is a compact interval equal to $T_A\cap X_b$. As  $T_A\cap X_b$ is compact, there exists $a\in A$ such that the axis of $a$ contains $T_A\cap X_b$ by \cite[Lemma 4.3]{paulin}.
As every element of $A$ is simple, as in the preceding proof this would contradict the Cut Vertex Lemma if $|T_A\cap X_b|>|b|_{\cS}$. This proves (1). For (2), it then suffices to show that if $a_1, a_2 \in F_N$ are elements of $F_N$ and the projections of $X_{a_1}$ and $X_{a_2}$ to $X_b$ are contained in an interval of length at most $|b|_{\cS}$, then $|[a_1]_b^\cS - [a_2]_b^\cS| \leq 1$. This again follows from the Geometric Interpretation: $T_A\cap X_b$ is contained in $(b^{i-1},b^{i+1}]$ for some $i<0$, or contained in  $[b^{i-1},b^{i+1})$  for some $i \geq 0$. In the first case, one can check that $[a]_b^\cS \in \{i, i+1\}$ for all $a \in A$, and in the second $[a]_b^\cS \in \{i-1, i\}$ for all $a \in A$. \end{proof}

\begin{theorem}\label{t:lip}
Suppose that $b\in F_N$ is not contained in a proper free factor  and that $\mathcal{S}$ is a basis that
minimizes $|b|_\mathcal{S}$. Then the map $\mathcal{AF} \to \mathbb{Z}$ given by $A \mapsto [A]^\mathcal{S}_b$ is 1--Lipschitz if $N\ge 3$ and is 2--Lipschitz if $N=2$. \end{theorem}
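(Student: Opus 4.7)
My plan is to verify the Lipschitz constants edge-by-edge in $\mathcal{AF}$, handling the two cases $N\ge 3$ and $N=2$ separately. Since $\mathcal{S}$ minimises $|b|_\mathcal{S}$, Proposition~\ref{p:minimal} tells us that $\wh(b)$ has no cut vertex, so Proposition~\ref{p:34} is applicable to every proper free factor relative to $b$; in particular, $[A]_b^\mathcal{S}\in\Z$ for every proper free factor $A$.

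\emph{The case $N\ge 3$.} An edge of $\mathcal{AF}$ is given by a proper containment $A\subsetneq B$ of proper free factors (after possibly swapping endpoints). The containment $A\subset B$ gives $[A]_b\le [B]_b$ directly from the definition as a supremum. Conversely, Proposition~\ref{p:34}(2) applied to the proper free factor $B$ shows that any two elements of $B$ have $[\cdot]_b$-values differing by at most $1$; taking suprema over $A\subset B$ and over $B$ yields $[B]_b\le[A]_b+1$. Thus $|[A]_b-[B]_b|\le 1$.

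\emph{The case $N=2$.} An edge joins $\<u\>$ to $\<v\>$, where $\{u,v\}$ is a basis of $F_2$. Because $\{u,uv\}$ is also a basis (a Nielsen transformation), $uv$ is primitive and $\<uv\>$ is a proper free factor, so Proposition~\ref{p:34}(1) bounds each of $|X_u\cap X_b|$, $|X_v\cap X_b|$ and $|X_{uv}\cap X_b|$ by $|b|_\mathcal{S}$. The plan is to show that the nearest-point projections of $X_u$ and $X_v$ to $X_b$ in the Cayley tree $T$ lie in a common interval of length at most $3|b|_\mathcal{S}$. If $X_u\cap X_v\ne\emptyset$, any point of the intersection has a single nearest-point projection on $X_b$ that belongs to both images, so the two projections overlap. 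If $X_u\cap X_v=\emptyset$, the standard fact about products of hyperbolic isometries in trees (Serre, \emph{Trees}, I.6) says that the axis of $uv$ contains the bridge $[p,q]$ joining $X_u$ to $X_v$; the segment of $X_b$ lying between the two projections is then contained in $X_{uv}\cap X_b$ and hence has length at most $|b|_\mathcal{S}$. In either case the union of the two projections sits in an interval of $X_b$ of length at most $3|b|_\mathcal{S}$. A case analysis in the spirit of the proof of Proposition~\ref{p:34}(2), using the rule of Remark~\ref{r:gi} to pass from projection intervals to the values of $[\cdot]_b$, then converts this metric bound into the integer bound $|[u]_b-[v]_b|\le 2$.

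\emph{Main obstacle.} The delicate step is the $N=2$ case: invoking the bridge fact for $X_{uv}$ and then carrying out the case analysis to obtain the sharp constant $2$ rather than the naive $3$. The sharpness comes from the collapsing of all positions straddling the basepoint of $X_b$ to the single value $0$ in the definition of $[a]_b$ (Remark~\ref{r:gi}), which effectively absorbs one unit of displacement whenever one of the two projections straddles the basepoint.
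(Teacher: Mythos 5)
Your proof is correct and follows essentially the same path as the paper's: for $N\ge 3$ it invokes Proposition~\ref{p:34}(2) directly on an edge $A< B$, and for $N=2$ it uses the fact that the bridge between $X_u$ and $X_v$ lies on the axis of the primitive element $uv$, together with Proposition~\ref{p:34}(1) applied to $\langle uv\rangle$, before concluding with a case analysis in the style of Proposition~\ref{p:34}. The only cosmetic difference is that you split on whether $X_u\cap X_v$ is empty while the paper splits on whether the projections to $X_b$ overlap; both resolve identically.
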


\begin{proof} Proposition \ref{p:minimal} tells us that Proposition~\ref{p:34} applies to $b$ and 
Proposition \ref{p:34}(2) assures us that if $A<B$ then $|[A]^\mathcal{S}_b - [B]^\mathcal{S}_b| \leq 1$; this covers the case $N=3$. For the case $N=2$, a different argument is needed, because $\AF$ is defined differently. In this case,
the vertex $\<a_1\>$ is adjacent to $\<a_2\>$ if $\{a_1,a_2\}$ is a basis. As in Proposition \ref{p:34}(1), we know that the projection $\pi_{X_b}(X_{a_i})$ of each axis to $X_b$ has length at most $|b|_\cS$. We claim that, furthermore, if the projections are disjoint then the arc connecting $\pi_{X_b}(X_{a_1})$ to $\pi_{X_b}(X_{a_2})$ is length at most $|b|_\mathcal{S}$. This claim clearly gives a uniform bound on the difference between $[a_1]_b$ and  $[a_2]_b$, and with a similar case-by-case analysis as in Proposition~\ref{p:34} one can check that $[a_1]_b$ and  $[a_2]_b$ differ by at most $2$.

To prove the claim, if  $X_{a_1}$ and $X_{a_2}$ do not intersect, then the bridge between them is contained in the axis for $a_1a_2$
(see \cite{CM} Figure 4). It follows that if $\pi_{X_b}(X_{a_1})$ and $\pi_{X_b}(X_{a_2})$
do not intersect, then the arc between the projections is contained in  $\pi_{X_b}(X_{a_1a_2})$.  Since $a_1a_2$
is primitive, Proposition \ref{p:34}(1) tells us that $\pi_{X_b}(X_{a_1a_2})$ has length at most $|b|_\cS$, and we have proved the claim.\end{proof}

\subsection{Change of Basis}

\begin{proposition} \label{p:basis_change}
Let $b \in F_N$ be an element that is not contained in a proper free factor. Let $\mathcal{S}$ and $\mathcal{T}$ be bases of $F_N$ which minimize the word length of $b$. Then there exists a constant $K$ such that for all $A \in \AF$ we have: 
\[ [A]_b^\mathcal{T} -K \leq [A]_b^{\mathcal{S}} \leq [A]_b^\mathcal{T} + K.\]
\end{proposition}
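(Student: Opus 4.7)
The plan is to use the geometric interpretation in Remark \ref{r:gi} together with the bi-Lipschitz equivalence of the two word metrics on $F_N$. Recall that $[A]_b^\ast$ is, up to an error bounded by $1$, the signed distance from $1$ to the projection $\pi_{X_b^\ast}(T_A^\ast)$ divided by $|b|_\ast$. Since both $\mathcal{S}$ and $\mathcal{T}$ minimize $|b|$, Proposition \ref{p:minimal} ensures that the Whitehead graphs of $b$ in both bases contain no cut vertex, so Proposition \ref{p:34} applies in both Cayley trees. In particular $|b|_\mathcal{S} = |b|_\mathcal{T}$, the lattice of group elements $\{b^n : n \in \Z\}$ sits on both axes at common spacing $|b|$, and for any proper free factor $A$ the intersection $T_A \cap X_b$ has length at most $|b|$ in either tree.

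Next, I would set up an equivariant quasi-isometry between the two trees. Each basis element of $\mathcal{S}$ is a word of bounded length in $\mathcal{T}$ and vice versa, so the two word metrics on $F_N$ are bi-Lipschitz equivalent with constant $C$ depending only on $\mathcal{S}$ and $\mathcal{T}$. The identity map on $F_N$ extends across edges to an $F_N$-equivariant quasi-isometry $\phi \colon T_\mathcal{S} \to T_\mathcal{T}$ fixing $1$. By equivariance, $\phi(X_b^\mathcal{S})$ is a $b$-invariant quasi-line, hence at bounded Hausdorff distance from $X_b^\mathcal{T}$; similarly, writing each of $T_A^\mathcal{S}$ and $T_A^\mathcal{T}$ as the union of axes of elements of $A$ and applying the same argument pointwise, $\phi(T_A^\mathcal{S})$ is at bounded Hausdorff distance from $T_A^\mathcal{T}$, with the bound uniform in $A$.

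The conclusion then follows by a standard principle: projections onto quasi-axes in trees are preserved by quasi-isometries up to bounded additive error. Consequently the signed $X_b$-position of $\pi_{X_b^\mathcal{S}}(T_A^\mathcal{S})$ in $T_\mathcal{S}$ and of $\pi_{X_b^\mathcal{T}}(T_A^\mathcal{T})$ in $T_\mathcal{T}$, both measured from the common basepoint $1$, differ by a constant depending only on $\mathcal{S}$, $\mathcal{T}$, and $b$. Dividing by the common translation length $|b|$ and invoking the geometric interpretation yields $|[A]_b^\mathcal{S} - [A]_b^\mathcal{T}| \leq K$ for an effective $K$.

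The main obstacle is making the comparison of projections rigorous: one needs to verify that $\phi$ sends $\pi_{X_b^\mathcal{S}}(T_A^\mathcal{S})$ into a bounded neighbourhood of $\pi_{X_b^\mathcal{T}}(T_A^\mathcal{T})$, via a coarse-projection argument in $T_\mathcal{T}$. Proposition \ref{p:34}(1) is crucial here: it forces each projection interval to have length at most $|b|$, so that a single integer (the adjacent lattice index $n$ with $b^n$ on $X_b$) essentially determines $[A]_b^\ast$, and the quasi-isometry invariance pins this integer down up to a bounded error.
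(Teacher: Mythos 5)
Your argument is correct and takes essentially the same route as the paper: both proofs use the $F_N$-equivariant map between the two Cayley trees that sends edges to geodesics, appeal to bounded backtracking (which you package as stability of quasi-geodesics/Morse lemma) to see that axes and minimal subtrees are coarsely preserved, and then read off $[A]_b$ from the projection to the common $b$-axis using the lattice $\{b^n\}$ and the geometric interpretation. The paper is more hands-on, explicitly decomposing a path $p=p_1p_2p_3$ and tracking the image under $f$, but this is exactly the concrete version of the ``projections are coarsely preserved'' principle you invoke; the only place to be slightly more careful in your write-up is to note explicitly that the additive (rather than merely bi-Lipschitz) comparison of distances from $1$ to the two projection sets comes from the fact that the projection set lies within $|b|$ of a lattice point $b^k$, and $f(b^k)=b^k$ with $d_{T^\mathcal{S}}(1,b^k)=d_{T^\mathcal{T}}(1,b^k)$ since $|b|_\mathcal{S}=|b|_\mathcal{T}$.
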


\begin{proof}
Let $T^\mathcal{S}$ and $T^\mathcal{T}$ be the Cayley trees of $F_N$ given by $\mathcal{S}$ and $\mathcal{T}$, respectively.
Let $f:T^\mathcal{S} \to T^\mathcal{T}$ be the map that sends each edge of $T^\cS$ to the geodesic
in $T^\mathcal{T}$ with the same endpoints. Let $1_\cals$ and $1_\calt$ be the basepoints of the respective Cayley trees and use $g_\cals=g.1_\cals$ and $g_\calt=g.1_\calt$ to denote the vertices in the trees given by $g \in F_N$. If $p=[1_\cals,g_\cals]$ is a path in $T^\mathcal{S}$ from $1_\cals$ to the axis of an element $a \in F_N$, then the \emph{bounded backtracking property} (see \cite{Cooper,GJLL}) implies that  $g_\calt=f(g_\cals)$ lies at distance at most $C$ from the axis of $a$ in $T^\mathcal{T}$, where $C$ is the bounded backtracking constant. Therefore, as $|b|_\mathcal{S}=|b|_\mathcal{T}$, the proposition reduces to showing that for any path $p$ based at $1_\cals$, the reduction of $f(p)$ spends approximately the same amount of time travelling along the axis of $b$ in $T^{\mathcal{T}}$ as the path $p$ does along the axis of $b$ in $T^\mathcal{S}$ (in the same direction). This is reasonably standard: decompose such a path $p$ as $p=p_1p_2p_3$, where $p_1=[1_\cals,b_\cals^k]$ for some $k$, the path $p_2$ is a segment of length $\leq |b|_\mathcal{S}$, and $p_3$ is a path disjoint from the axis $X^\mathcal{S}_b \subset T^\mathcal{S}$ except for its initial endpoint. As $f$ is bi-Lipschitz on the vertex set, $p_3$ can further be decomposed as $p_3'p_3''$, where $f(p_3'')$ is disjoint from $X^\mathcal{T}_b$ and $p_3'$ is of uniformly bounded length. Then the intersection of the reduction of $f(p)$ with $X^\mathcal{T}_b$ in $T_\mathcal{T}$ is $[1_\calt,b_\calt^k]$, up to cancellation/concatenation with a subpath of $f(p_2p_3')$, which is of uniformly bounded length.
\end{proof}

\section{Building quasi-flats} \label{s:flats}

In this section we will build quasi-flats in $\AF$. The first proposition is quite technical: exceptionally, in the proof of this proposition we will assume the reader is familiar with the \emph{stable train tracks} of \cite{BH}. 

\begin{proposition}\label{p:tt}
Let $\S$ be a compact surface with one boundary component, and fix a basepoint on the boundary giving  an identification  $\pi_1\S=F_N$. Let $b\in F_N$ be the homotopy class of the boundary loop, let $\phi_0\in\autN$
be the automorphism induced by a pseudo-Anosov element of $\Mod(\S)$, and let $[\phi_0]$ be its
outer automorphism class. Then, there exists a basis $\mathcal{S}$ of $F_N$ and a representative $\phi$ of $ [\phi_0]$ fixing $b$, such that:

\begin{enumerate}
\item The basis $\mathcal{S}$ minimizes $|b|_\mathcal{S}$.
\item For all $A \in \AF$ there exists a constant $M_A$ such that $$|[\phi^r\ad_b^k(A)]_b- [\ad_b^k(A)]_b| \leq M_A$$ for all $k,r \in \mathbb{Z}$.
\item $\phi$ is the unique representative of $[\phi_0]$ that fixes $b$ and acts on the Gromov boundary 
$\partial F_N$ so that  the endpoints of the infinite words $b^\infty$ and $b^{-\infty}$ are
both non-attracting fixed points.
\end{enumerate}
\end{proposition}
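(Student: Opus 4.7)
My plan treats the three items in the order (1), (3), (2).

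Item (1) is a direct application of Whitehead's algorithm: choose $\mathcal{S}$ to be any basis achieving $\min_{\mathcal{T}}|b|_{\mathcal{T}}$. Then Proposition~\ref{p:minimal} ensures the Whitehead graph $\wh(b)$ has no cut vertex, so the hypotheses of Proposition~\ref{p:34} are satisfied.

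For item (3), I first note that if $\phi_1,\phi_2$ both fix $b$ and represent $[\phi_0]$, then $\phi_1\phi_2^{-1}=\ad_g$ for some $g\in C_{F_N}(b)$. Since $b$ is the boundary word of a compact surface with $\chi(\Sigma)<0$, it is not a proper power, so $C_{F_N}(b)=\langle b\rangle$ and the representatives fixing $b$ form the $\mathbb{Z}$-family $\{\phi\circ\ad_b^m:m\in\mathbb{Z}\}$. All members of this family fix $b^{\pm\infty}\in\partial F_N$. To produce the distinguished $\phi$, I would take a Bestvina--Handel relative train track representative $f\colon G\to G$ of $[\phi_0]$ in which $b$ is realised as an invariant fixed edge path (the pseudo-Anosov preserves each boundary component), and define $\phi$ to be the lift of $f$ that fixes $b$ and acts trivially on the fixed boundary stratum. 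Uniqueness inside the $\mathbb{Z}$-family follows because pre-composing by $\ad_b^m$ with $m\ne 0$ shifts the subtrees hanging off $X_b$ and therefore turns one of $b^{\pm\infty}$ into an attracting fixed point.

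Item (2) is the technical heart. Since $\phi\circ\ad_b=\ad_b\circ\phi$ we have $\phi^r\ad_b^k(A)=\ad_b^k(\phi^r(A))$. A short computation using Remark~\ref{r:gi} together with the bound $|T_X\cap X_b|\le|b|_\mathcal{S}$ from Proposition~\ref{p:34}(1) yields $[\ad_b^k(X)]_b=k+[X]_b+O(1)$ for any proper free factor $X$, with the $O(1)$ depending only on $|b|_\mathcal{S}$. Thus the inequality in (2) reduces to proving that $|[\phi^r(A)]_b-[A]_b|$ is uniformly bounded in $r$, with a constant depending only on $A$. To do this I would lift the train track $f$ to $\tilde f\colon\tilde G\to\tilde G$; because $\phi(b)=b$, the lift $\tilde f$ commutes with $b$-translation and so setwise preserves the $b$-axis $X_b^{\tilde G}\subset\tilde G$. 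The non-attracting condition from (3) forces $\tilde f$ to fix $X_b^{\tilde G}$ pointwise, while the stable train track structure provides uniformly bounded backtracking for all iterates $\tilde f^r$ in a neighbourhood of $X_b^{\tilde G}$. Consequently $\pi_{X_b^{\tilde G}}(T_{\phi^r(A)}^{\tilde G})$ lies in a bounded neighbourhood of $\pi_{X_b^{\tilde G}}(T_A^{\tilde G})$ for every $r$, with the neighbourhood size depending on $|T_A^{\tilde G}\cap X_b^{\tilde G}|$ alone. Transferring from $\tilde G$ to the Cayley tree via a bounded backtracking quasi-isometry and invoking Proposition~\ref{p:basis_change} to absorb the change of basis yields the desired bound $|[\phi^r(A)]_b-[A]_b|\le M_A$.

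The main obstacle will be establishing that $\tilde f$ acts trivially on $X_b^{\tilde G}$, or equivalently that the invariant boundary stratum carries no hidden linear growth under our chosen lift. Without this control, the exponential stretching of $\tilde f$ transverse to $b$ could drag the image of $T_A^{\tilde G}$ along $X_b^{\tilde G}$ and destroy the required bound as $r\to\pm\infty$. The payoff of the non-attracting hypothesis secured in (3) is precisely to rule this out, by placing $b$ in a fixed exceptional stratum of the Bestvina--Handel decomposition on which $\phi$ has no net translation.
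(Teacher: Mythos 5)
Your overall framework matches the paper's---stable train tracks, the indivisible Nielsen path representing $b$, a lift $\tilde f$ fixing a point on the $b$-axis, and the commutativity $\phi^r\ad_b^k=\ad_b^k\phi^r$---and your treatment of items (1) and (3) is essentially correct (the uniqueness argument for (3) via pre-composition by $\ad_b^m$ and the dynamics on $\partial F_N$ is exactly right). But there is a genuine gap in item (2).

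First, the claim that ``the non-attracting condition from (3) forces $\tilde f$ to fix $X_b^{\tilde G}$ pointwise'' is false. The Nielsen path $\rho$ has an illegal turn, and $f(\rho)$ tightens to $\rho$ but is not equal to $\rho$ as an edge path; so $\tilde f$ restricted to the axis $X_b^{\tilde G}$ fixes only the lattice of points $b^n\tilde x$ (the fixed point of $f$ and its $b$-translates), not the whole axis. Second, the assertion that the stable train track gives ``uniformly bounded backtracking for all iterates $\tilde f^r$'' is unjustified: bounded backtracking constants for $\tilde f^r$ grow with $r$ in general, and you would need a separate argument to show this growth is confined away from $X_b^{\tilde G}$. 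As you concede, without this control, the exponential stretching transverse to $b$ could drag projections along the axis, so your bound on $\pi_{X_b^{\tilde G}}(T_{\phi^r(A)}^{\tilde G})$ does not follow.

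The paper avoids both issues with a trick you are missing: rather than controlling $T_{\phi^r(A)}$ for arbitrary $A$, it fixes one primitive element $l$ represented by a \emph{legal} loop in $G$ through the $f$-fixed point $x$. Legality is preserved by $\tilde f$, so the axis of $\phi^r(l)$ continues to pass through $\tilde x$ and hence $[\phi^r(l)]_b=0$ for all $r\ge 0$; similarly $[\phi^r\ad_b^k(l)]_b\in\{k-1,k,k+1\}$. The 2-Lipschitz property of $A\mapsto[A]_b^{\mathcal S}$ (Theorem~\ref{t:lip}), together with the fact that $\phi$ and $\ad_b$ act by isometries on $\AF$, then transfers this bound to every vertex $A$ at finite $\AF$-distance from $\langle l\rangle$, producing the constant $C_A$. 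Negative $r$ is handled by rerunning the argument for $[\phi_0]^{-1}$ and invoking the uniqueness in (3) to identify the resulting representative with $\phi^{-1}$---a step your write-up also omits. Finally, note that choosing ``any'' length-minimizing basis in item (1) severs the identification between the Cayley tree and $\tilde G$ that makes the legal-loop computation possible; the paper instead \emph{constructs} $\mathcal S$ from a maximal tree in the train track graph and then verifies $|b|_{\mathcal S}=2N$ is minimal.
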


\begin{proof}
In Sections 1-3 of \cite{BH}, it is shown that an automorphism $\phi_0$ as above has a \emph{stable train track} representative $f \colon G \to G$ on a graph $G$ with exactly one indivisible Nielsen path (iNP) $\rho$ which, as the automorphism is geometric, forms a loop representing $b$ based at a point $x \in G$ that is fixed by $f$. Furthermore, $\rho$ crosses every edge in $G$ exactly twice (see \cite[Lemma~3.9]{BH} and the discussion in \cite[Section~4]{BH}). 

We identify $\pi_1(G,x)$ with $F_N$ via a choice of maximal tree $T$ in $G$ together with an orientation of the edges in $G \smallsetminus T$. Each edge-loop based at $x$ determines a word in the letters
$\cS^{\pm 1}$ whose length is the number of edges  of $G \smallsetminus T$ crossed by the loop. In particular, $|b|_\mathcal{S}=2N$. Since $b$ is the boundary of a surface of Euler characteristic $1-N$, this tells us that $\mathcal{S}$ minimizes the length of $b$. We define  $\phi$ to be the automorphism induced by $f_* \colon \pi_1(G,x) \to \pi_1(G,x)$. 

We now prove the second part with $r \geq 0$. There exists a legal loop $l$ in $G$ based at $x$ that crosses some edge exactly once (this is a variant of Francaviglia and Martino's \emph{Sausage Lemma} \cite[Lemma 3.14]{FM}). Any such   loop represents a primitive element  in $F_N$. As in Section~2 of \cite{BH},  we lift $f$ to a map $\tilde f \colon \tilde G \to \tilde G$ on the universal cover that represents $\phi$ (i.e. $\tilde f(gy)=\phi(g) \tilde f(y)$ for all $y \in \tilde G$).  The map $\tilde f$ fixes a lift $\tilde x$ of our  basepoint $x\in G$, and the axis of $b$ (acting as a deck transformation of $\tilde G$) contains $\tilde x$. As the loop $l$ is based at $x$ its axis $\axis_{\tilde G}(l)$ in $\tilde G$ also contains $\tilde x$. As $l$ is legal and $\tilde x$ is fixed by $\tilde f$, the axis of $\phi^r(l)$ also contains $\tilde x$ for all $r \geq 0$. It follows that iterated images of $l$ are cyclically reduced, so that $[\phi^r(l)]_b=0$ for all $r \geq 0$. For $k \in \mathbb{Z}$, the axis of $\ad_b^k(l)$ is also legal and contains the point $b^k\tilde x$, so that $k-1 \leq [\ad_b^k(l)]_b \leq k+1$. As $b^k\tilde x$ is also fixed under $\tilde f$, this implies that $k-1 \leq [\phi^r\ad_b^k(l)]_b \leq k+1$ for all $k \in \mathbb{Z}$ and $r \geq 0$. As the map $\AF \to \mathbb{Z}$ induced by $[A] \mapsto [A]_b$ is 2-Lipschitz, we have 
\begin{align*}
\begin{split}
    |[\phi^r\ad_b^k(A)]_b-[\ad_b^k(A)]_b|  \leq{}&  |[\phi^r\ad_b^k(A)]_b-[\phi^r\ad_b^k(l)]_b| +  |[\phi^r\ad_b^k(l)]_b -[\ad_b^k(l)]_b| \\
         & +  |[\ad_b^k(l)]_b-[\ad_b^k(A)]_b|
\end{split}\\
\leq{}& 2d_{\AF}(A,\langle l \rangle) +2 + 2d_{\AF}(A,\langle l\rangle ):=C_A,
\end{align*}
for all $r \geq 0$ and $k \in \mathbb{Z}$. 

Note that any possible choices for representatives of $[\phi_0]$ fixing $b$ differ by a power of $\ad_b$. If $\phi$ is the representative chosen above then as $\tilde f(b^{n} \tilde x) = b^n \tilde x$ for all $n \in \mathbb{Z}$, both ends of the axis of $b$ in $\tilde G$ are  non-attracting under the action of $\phi$ on $\partial F_N$. For $k \neq 0$, one end of the axis will be attracting and the other will be repelling under the action of $\ad_b^k  \phi$ on the boundary. This establishes (3).

To deal with the case where $r < 0$ in (2), we run the entire argument again for $[\phi_0]^{-1}$. This provides a representative $\psi$ of $[\phi_0]^{-1}$ fixing $b$, such that for all $A \in \AF$ there exists a constant $C_A'$ such that $|[\psi^r\ad_b^k(A)]_b- [\ad_b^k(A)]_b|  \leq C_A'$ for all $k \in \mathbb{Z}$ and $r \geq 0$. Note that although you might be working with a different basis $\mathcal{S}'$ on which $b$ is minimal length, the projections $\AF \to \mathbb{Z}$ are coarsely equivalent by Proposition~\ref{p:basis_change}.  Furthermore, this is the unique representative of $[\phi_0]^{-1}$ fixing $b$ such that $b^{\infty}$ and $b^{-\infty}$ are nonattracting under $\partial \psi$. This implies that $\psi=\phi^{-1}$, so we can finish part (2) by taking $M_A:=\max\{C_A,C_A'\}$. 

\end{proof}

We finally have all of the tools that we need to prove Theorem \ref{thm3}, which we restate for the reader's 
convenience.

\begin{theorem} Let $\S$ be a compact surface with one boundary component, fix an identification 
$\pi_1\S=F_N$,  let $b\in F_N$ be the homotopy class of the boundary loop,  let $\phi\in\autN$
be the automorphism induced by a pseudo-Anosov element of $\Mod(\S)$, let $\Lambda = \<\phi,\, \ad_b\><\autN$
and note that $\Lambda\cong\Z^2$. Then, for every $A\in \AF$, the orbit map $\lambda \mapsto \lambda(A)$, defines a 
quasi-isometric embedding  $\Lambda\cong \mathbb{Z}^2 \hookrightarrow \mathcal{AF}$.
\end{theorem}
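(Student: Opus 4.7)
The plan is to combine the Lipschitz projection of Theorem \ref{thm4} (detecting the $\ad_b$-direction) with the Bestvina--Feighn loxodromic action on $\OF$ (detecting the $\phi$-direction). First, $\Lambda\cong\Z^2$: since $\phi$ is chosen to fix $b$ it commutes with $\ad_b$; the outer class $[\phi]$ is fully irreducible (standard for automorphisms arising from pseudo-Anosovs on a surface with one boundary), so it has infinite order in $\outN$, while $\ad_b$ is a nontrivial inner automorphism, and hence the two cyclic subgroups are independent. The orbit map $\Lambda\to\AF$ is Lipschitz on general grounds, since each of the generators moves $A$ a bounded distance. Thus the task reduces to establishing a lower bound of the form $d_{\AF}(A,\phi^r\ad_b^k A)\gtrsim |r|+|k|$, with constants possibly depending on $A$.

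To detect the $k$-coordinate, I would apply the Lipschitz map $[\cdot]_b^\cS\colon\AF\to\Z$ from Theorem \ref{t:lip}, where $\cS$ and the specific representative $\phi$ come from Proposition \ref{p:tt}. The key algebraic computation is that $[\ad_b^k A]_b^\cS = k + [A]_b^\cS$: writing any $a\in A$ in $b$-reduced form $a\equiv b^j a' b^{-j}$, the conjugate $b^k a b^{-k}\equiv b^{k+j}a'b^{-(k+j)}$ is already $b$-reduced (because $a'$ neither begins nor ends in $b^{\pm 1}$), and taking suprema over $a\in A$ shifts by exactly $k$. Combined with Proposition \ref{p:tt}(2), this yields
\[ \bigl|[\phi^r\ad_b^k A]_b - [A]_b\bigr| \geq |k| - M_A, \]
and the Lipschitz property of $[\cdot]_b$ then forces $d_{\AF}(A,\phi^r\ad_b^k A)$ to be at least a fixed multiple of $|k|-M_A$.

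To detect the $r$-coordinate, I would project to $\OF$ via the natural $1$-Lipschitz quotient $\AF\to\OF$. Since $\ad_b$ is trivial in $\outN$, the image of $\phi^r\ad_b^k A$ in $\OF$ is $[\phi]^r[A]$, and by Theorem \ref{thm1} the fully irreducible element $[\phi]$ acts loxodromically on $\OF$, so $d_{\OF}([A],[\phi]^r[A])\gtrsim|r|$. Combining the two estimates gives
\[ d_{\AF}(A,\phi^r\ad_b^k A)\gtrsim\max(|r|,|k|)\asymp|r|+|k|, \]
which, together with the Lipschitz upper bound, establishes the quasi-isometric embedding.

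The main obstacle -- and the reason Proposition \ref{p:tt} is phrased in such a technical form -- is the potential cross-interaction between the two coordinates: a priori, iterating $\phi$ could dramatically distort the value of $[\cdot]_b$ and wash out the detection of the $k$-coordinate. The bounded-error estimate in Proposition \ref{p:tt}(2), which depends crucially on choosing the unique representative of $[\phi_0]$ for which both endpoints of the axis of $b$ are non-attracting (Proposition \ref{p:tt}(3)), is precisely the ingredient that decouples the two coordinates; once it is in hand, the rest of the argument is a clean combination of Theorem \ref{thm4} and Bestvina--Feighn hyperbolicity of $\OF$.
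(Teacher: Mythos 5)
Your proposal is correct and takes essentially the same approach as the paper: both use the Lipschitz map $[\,\cdot\,]_b^\cS$ together with Proposition~\ref{p:tt}(2) to control the $\ad_b$-coordinate, and the projection to $\OF$ together with Bestvina--Feighn hyperbolicity to control the $\phi$-coordinate, the only difference being that the paper packages the argument as a coarse Lipschitz retraction of $\AF$ onto the $\Lambda$-orbit (via $B\mapsto\phi^{R(B)}\ad_b^{[B]_b}(A)$) rather than as two separate distance lower bounds. One small inaccuracy worth flagging: the exact identity $[\ad_b^k A]_b^\cS = k + [A]_b^\cS$ is not quite right (when conjugating by $b^k$ flips the sign of the exponent $j$ in the $b$-reduced form $b^ja'b^{-j}$, the junction between $b^{k+j}$ and $a'$ need not stay reduced, since the relevant letter of $b^{\pm1}$ changes), but the error is uniformly bounded, which is all the argument needs -- and indeed the paper, working with $A=\langle s\rangle$ for $s\in\cS$, only claims $k-1\leq[\ad_b^k(A)]_b\leq k+1$.
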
 

\begin{proof}
Note that it is enough to prove the result for one orbit. We may also replace $\phi$ with $\ad_b^k\phi$ without changing $\Lambda$, so we can assume that $\phi$ and the basis $\cS$ are as described in Proposition~\ref{p:tt}. 
Let $s \in \mathcal{S}$ and let $A=\langle s \rangle$. Then $k-1 \leq [\ad_b^k(A)]_b^\cS \leq k+1$ for all $k \in \mathbb{Z}$. We define a map $\OF \to \mathbb{Z}$ as follows:  for each conjugacy class $[B]$ of a free factor we pick a closest point $[\phi]^j([A])$ in the $[\phi]$-orbit of $[A]$. This assignment $[B] \mapsto j$ is Lipschitz \cite{BF},
 and by composing it with the natural projection $\AF \to \OF$ we obtain a Lipschitz map $R \colon \AF \to \mathbb{Z}$ which is invariant under $\Inn(F_N)$. It follows from Theorem \ref{t:lip} that the 
 map $[\,\cdot\,]_b^\mathcal{S} \times R \colon \AF \to \mathbb{Z}^2$ is also Lipschitz, and we claim that $B \mapsto  \phi^{R(B)}\ad_b^{[B]_b}(A)$ gives a Lipshitz retraction of $\AF$ onto the orbit of $\Lambda=\langle \phi, \ad_b\rangle$. To see this, note that
  $R(\phi^j\ad_b^k(A))=j$ for all $j,k \in \mathbb{Z}$, and \[ k-1-M_A \leq [\phi^j\ad_b^k(A)]_b \leq k+1+M_A, \] where $M_A$ is the constant provided by Proposition~\ref{p:tt}.
\end{proof}

\begin{corollary}
For $N\ge 2$, the $\aut$ free factor complex $\AF$ is not Gromov-hyperbolic.
\end{corollary}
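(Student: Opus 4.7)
The plan is to deduce the corollary directly from Theorem \ref{thm3} together with the standard obstruction that a Gromov-hyperbolic geodesic space admits no quasi-isometrically embedded copy of $\Z^2$. The first task is to check that for every $N\ge 2$ the hypotheses of Theorem \ref{thm3} can actually be realized, i.e.\ there exists a compact surface $\Sigma$ with one boundary component such that $\pi_1(\Sigma)\cong F_N$ and $\Mod(\Sigma)$ contains a pseudo-Anosov element. A surface $\Sigma$ with one boundary component has $\pi_1(\Sigma)\cong F_{2g}$ if it is orientable of genus $g$, and $\pi_1(\Sigma)\cong F_g$ if it is non-orientable with $g$ cross-caps. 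So for $N=2$ one may take the once-punctured torus, and for $N\ge 3$ one may take the non-orientable surface of genus $N$ with one boundary component; in every case the surface has sufficiently large complexity to support pseudo-Anosov mapping classes (for example via Thurston's construction from pairs of filling multicurves, or by noting that the orientation double cover of the non-orientable surface supports a pseudo-Anosov whose descent can be arranged to act on $\Sigma$).

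With such a $\Sigma$ in hand, Theorem \ref{thm3} yields a subgroup $\Lambda\cong\Z^2<\autn$ whose orbit map into $\AF$ is a quasi-isometric embedding. In particular, $\AF$ contains a subset that, with the induced metric, is quasi-isometric to the Euclidean plane $\R^2$.

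Next I would invoke the well-known fact that a geodesic Gromov-hyperbolic metric space contains no quasi-isometrically embedded copy of $\Z^2$ (equivalently of $\R^2$). One way to see this: in a $\delta$-hyperbolic space any bi-infinite $(K,C)$-quasi-geodesic lies within bounded Hausdorff distance of a bi-infinite geodesic, so a quasi-isometric embedding of $\Z^2$ through the origin would provide two transverse bi-infinite geodesics through a common bounded neighbourhood along whose products the thin-triangles condition must fail for arbitrarily far-apart basepoints. Applying this to the orbit furnished by Theorem \ref{thm3} shows that $\AF$ cannot be Gromov-hyperbolic.

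I do not expect any real obstacle in this argument; the only step that requires attention is the realization step for small $N$ (particularly $N=2$, where $\AF$ is defined non-standardly), since one must be sure that Theorem \ref{thm3} genuinely applies in that case. But the theorem as stated covers all $N\ge 2$, so this is simply a bookkeeping check: confirm that the surface exists, confirm pseudo-Anosovs exist on it, and then the combination of Theorem \ref{thm3} with the no-quasi-flats principle completes the proof.
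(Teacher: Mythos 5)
Your proposal is correct and takes essentially the same approach as the paper: combine Theorem~\ref{thm3} with the standard fact that geodesic hyperbolic spaces admit no quasi-isometrically embedded $\Z^2$, then verify that for every $N\ge 2$ a compact surface with one boundary component, Euler characteristic $1-N$, and a pseudo-Anosov mapping class exists. The paper differs only in cosmetic choices (orientable surfaces for even $N$, a connected sum with a projective plane for odd $N$, and a direct citation of Penner for the existence of pseudo-Anosovs on non-orientable surfaces rather than the double-cover/descent sketch you offer).
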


\begin{proof}
This follows immediately from the theorem and the fact that for every $N\ge 2$ there is a compact surface with one boundary component that has euler characteristic $1-N$ and admits pseudo-Anosov diffeomorphisms. For $N$ even, this is an orientable surface of genus $N/2$, while for $N$ odd we must take the connected sum of an orientable surface
of genus $(N-1)/2$ with a projective plane \cite{penner}.
\end{proof}

\section{Problem list} \label{s:last}

We end the paper with a list of open problems.

Our results extend the list of automorphisms that are known to  
have undistorted orbits in $\AF$: Bestvina and Feighn \cite{BF} proved this is true for fully irreducible
automorphisms, and we have added inner automorphisms  $\ad_b$ given by filling elements. 

\begin{question}
Which cyclic subgroups  of $\aut(F_N)$ have undistorted orbits in $\AF$? Is every orbit of a cyclic subgroup either finite or undistorted?
\end{question}

 One can ask about other abelian subgroups of $\aut(F_N)$, or more ambitiously, about more general quasi-flats:

\begin{question}Which abelian subgroups of $\aut(F_N)$ have undistorted orbits in $\AF$? 
\end{question}

\begin{question}
Are there any quasi-flats of rank 3, i.e. (possibly non-equivariant) quasi-isometric embeddings of $\Z^3$ into $\AF$?
\end{question}

We find it unlikely that there should be an action of $\Z^3$ with quasi-isometric orbits.
The image of  any $\Z^3\cong\Lambda<\autN$ in $\outN$ contains a copy of $\Z^2$; as we saw in
the introduction, this implies that $\Lambda$ has a finite orbit in $\OF$, and therefore a subgroup of finite index
in $\Lambda$ fixes some $[A]\in\OF$. So if there were a 
free action of $\Z^3$ on $\AF$ with quasi-isometrically embedded orbits, then there would be such an orbit
in the fibre over a point in $\OF$. It does seem possible, however, that there are $\mathbb{Z}^2$ subgroups with undistorted orbits  in such a fibre: candidates can be obtained by replacing the pseudo-Anosov $\phi$ in Theroem~\ref{thm3} with a partial pseudo-Anosov supported on a subsurface containing the boundary of $\Sigma$. In this case $\langle \phi, \ad_b \rangle\cong\mathbb{Z}^2$ does not fix an obvious free factor, but does fix the conjugacy class of a free factor. It is not clear whether the orbits of such  subgroups are undistorted or not. 

The quasi-flats in $\AF$ constructed in this paper come in families
where each pair of quasi-flats coarsely share a common line: if $b$ is the boundary word for a surface $\S$,
then independent pseudo-Anosov  homeomorphisms of $\Sigma$ will give distinct quasi-flats, each pair
of which will have bounded neighbourhoods whose intersection is a quasi-line that is a bounded neighbourhood 
of an  $\<\ad_b\>$-orbit. This observation allows us to construct many more non-equivariant quasi-isometric embeddings  $\Z^2\hookrightarrow\AF$ by piecing together half-flats and parallel strips from the various
equivariant flats coarsely containing $\<\ad_b\>$-orbits.
This pattern of intersections shows that the families of quasi-flats that we currently have  cannot be used to 
define a relative hyperbolic structure on $\AF$. (We refer to \cite{sisto} for the definition of such a structure.)
But our lack of knowledge about quasi-flats in general means that we cannot answer the
following question.

\begin{question}
Is $\AF$ relatively hyperbolic? If not, is it thick in the sense of \cite{bds}?
\end{question}
 
We have seen that if a compact surface $\Sigma$ has one boundary component then every isomorphism $\pi_1(\Sigma)\cong F_N$ gives rise to a family of quasi-flats that coarsely intersect in a quasi-line. If such families account for all quasi-flats in $\AF$, then, in the spirit of Dowdall and Taylor's \emph{co-surface graph} \cite{DT}, one might hope to obtain a hyperbolic space by coning them off.
 
\begin{definition}[An Aut version of the co-surface graph]
Let $\mathcal{AC}_N$ be the graph obtained from $\AF$ by adding a new edge between two points $A$ and $B$ if there exists a surface $\Sigma$ with one boundary component such that both $A$ and $B$ are tethered subsurface subgroups of $\Sigma$ (subsurfaces with an embedded arc to the basepoint, which is on the boundary).
\end{definition}

Dowdall and Taylor proved that the co-surface graph of $\out(F_N)$ is hyperbolic. Following their lead, we can ask the same question about the Aut version.

\begin{question}
Is $\mathcal{AC}_N$ Gromov-hyperbolic?
\end{question}

\bibliographystyle{alpha}
\bibliography{bibliography}

\end{document}